\newtheorem{thm}{Theorem}
\newtheorem{prop}[thm]{Proposition}
\newtheorem{cor}[thm]{Corollary}
\newcommand{\E}{\mathbb{E}}
\newcommand{\footremember}[2]{%
    \footnote{#2}
    \newcounter{#1}
    \setcounter{#1}{\value{footnote}}%
}
\newcommand{\footrecall}[1]{%
    \footnotemark[\value{#1}]%
} 
\def\hat{\widehat}
\begin{document}

\title{Locally private non-asymptotic testing of discrete distributions is faster using interactive mechanisms}

\author{Thomas B. Berrett\footremember{fund1}{Financial support from the French National Research Agency (ANR) under the grant Labex Ecodec (ANR-11-LABEX-0047)}\footremember{fund2}{Financial support from the French National Research Agency (ANR) under the grant ANR-17-CE40-0003 HIDITSA} \,  and Cristina Butucea\footrecall{fund1} \\
  CREST, ENSAE, Institut Polytechnique de Paris
}

%\thankstext{t1}{Financial support from the French National Research Agency (ANR) under the grant Labex Ecodec (ANR-11-LABEX-0047)}
%\thankstext{t2}{Financial support from the French National Research Agency (ANR) under the grant ANR-17-CE40-0003 HIDITSA}

\maketitle

\begin{abstract}
We find separation rates for testing multinomial or more general discrete distributions under the constraint of $\alpha$-local differential privacy. We construct efficient randomized algorithms and test procedures, in both the case where only non-interactive privacy mechanisms are allowed and also in the case where all sequentially interactive privacy mechanisms are allowed. The separation rates are faster in the latter case. We prove general information theoretical bounds that allow us to establish the optimality of our algorithms among all pairs of privacy mechanisms and test procedures, in most usual cases. Considered examples include testing uniform, polynomially and exponentially decreasing distributions.
\end{abstract}
\section{Introduction}

Hypothesis testing of discrete distributions is intensively used as a first step in data based decision making and it is now also a component of many machine learning algorithms. Given samples from an unknown probability distribution $p$ and a known reference distribution $p_0$, the goal of a goodness-of-fit test is to decide whether $p$ fits $p_0$, or is signficantly different to it in some suitable sense. Here we will measure distance between distributions using either the $\mathbb{L}_1$ norm or the $\mathbb{L}_2$ norm, with our alternative hypotheses consisting of all distributions $p$ whose distance from $p_0$ is above a certain threshold.
%Distance between distributions is measured here by either the $\mathbb{L}_1$ norm or the $\mathbb{L}_2$ norm.
Our goal is to make accurate decisions, i.e. with low error probabilities, for distributions $p$ as close to $p_0$ as possible. The smallest separation between $p_0$ and the alternative hypothesis for which it remains possible to reliably distinguish between the two hypotheses is known as the uniform separation rate, $\delta$. %The best distance to $p_0$ that still allows to distinguish between $p$ and $p_0$ is called the uniform separation rate, $\delta$.
Its optimality is proven by showing that, whenever $p$ is closer to $p_0$ than $\delta$, no test procedure will be able to distinguish them with small error probabilities. As shown by \citet{VV2017}, the dependence of $\delta$ on $p_0$ in the standard problem without privacy constraints is pronounced and intricate.

In this work, we quantify how the constraint of local differential privacy affects the optimal separation rate. Differential privacy \cite{DMNS2006} is the most popular formalism under which the analyst statistically randomizes data to be published in order to protect the privacy of the individuals in the study. The way in which the data is randomized, known as the privacy mechanism, must be carefully chosen to preserve the information in the data that is most pertinent for the task at hand, though it is well-established that the cost of protecting privacy is necessarily a deterioration in statistical performance. Local differential privacy, in which there is no trusted curator who has access to all the original data, is more stringent than the original differential privacy constraint, and it is often observed in the literature that in this context we experience a further deterioration of the achievable performance of estimation and test procedures, which are allowed to use private data only.

\subsection{Our contributions}

We find the optimal separation rate around an arbitrary discrete distribution $p_0$ under local differential privacy constraints and this optimality involves two steps. On the one hand, we provide efficient and statistically optimal pairs of privacy mechanisms and their associated test procedures whose error probabilities are small for all distributions $p$ further from $p_0$ than $\delta$. On the other hand, we show that whenever $p$ is closer to $p_0$ than the separation rate $\delta$, no pair of privacy mechanism and test procedure is able to distinguish $p$ and $p_0$. We show that faster rates are attainable using interactive privacy mechanisms (which are allowed to use one original sample together with all the available private samples) than using non-interactive privacy mechanisms (which are only allowed to one original sample at a time). The interactive mechanism that we use is a two-step procedure: for the first half of the sample we employ a Laplace mechanism and estimate the unknown probabilities, for the second half we randomize encoding this information and build a $\chi^2$-type statistic. Our optimality results show that the separation rates are optimal in most usual cases. Let us stress the fact that the second part of Theorem~\ref{Thm:InfoBounds} is particularly useful in the case of noninteractive mechanisms where, as it was also noted by \cite{LLL2020},
the usual inequalities in  \cite{DJW2018} can only result in suboptimal lower bounds. 
We highlight the examples of (nearly) uniform distributions, and distributions with polynomially or exponentially decreasing tails. 
All results are valid nonasymptotically, that is with a finite number of samples.

\subsection{Related work}

The study of discrete data under local privacy constraints can be traced back at least as far as \citet{Warner1965}, in which the classical randomised response mechanism was introduced to provide privacy when estimating the proportion of a population that belongs to a certain group. This problem can be thought of as a special case, with an alphabet of size two, of the problem of estimating the probability vector of a multinomial random variable. For a general (finite) alphabet size, \citet{DJW2013} derive upper and lower bounds on the minimax estimation risk in this more general problem for both the $\mathbb{L}_1$ and $\mathbb{L}_2$ metrics, and, in particular, show that a generalization of the randomized response algorithm is rate-optimal in certain regimes. Besides the standard estimation problem, one can also consider the problem of estimating succinct histograms, or heavy hitters; see, for example, \citet{BS2015}.

Compared with estimation problems, hypothesis testing is relatively under-explored in the setting of local differential privacy. Early work includes \citet{KOV2014,KOV2016}, in which the aim is to test the simple hypotheses $H_0: P=P_0$ vs. $H_1: P=P_1$, where $P_0$ and $P_1$ are two given discrete distributions. Under a mutual information-based local privacy constraint, \citet{LSTd2017} considered the more general problem of $m$-ary hypothesis testing. The goodness-of-fit testing problem, which we consider in this paper, was investigated in \citet{GR2017,Sheffet2018}, where the authors provide analyses of procedures based on chi-squared tests and the optimal non-private test of \citet{VV2017}, respectively, under specific privacy mechanisms. Unfortunately, these privacy mechanisms and tests are typically suboptimal. As we show here, a corrected chi-squared statistic, calculated using suitably generated private data, may be optimal for testing uniformity, but for general null hypotheses a more subtle procedure is required. \citet{ACFT2019} studies the problem of testing uniformity, and provides tests that they show are optimal among all tests using their chosen privacy mechanisms. Techniques for proving more general lower bounds, over general classes of privacy mechanisms, are developed in \citet{ACT2018} and applied to uniformity testing. The role of interactivity in locally private testing is studied in \citet{JMNR2019}, where it is shown that an optimal procedure for testing the hypotheses $H_i: P \in \mathcal{P}_i, i=1,2$, for disjoint, convex $\mathcal{P}_i$, is non-interactive. This is in contrast to our results, which show that, in goodness-of-fit testing, interactive procedures achieve significantly faster separation rates.

In the non-private setting, goodness-of-fit testing of discrete distributions has recently received a great deal of attention. \citet{VV2017} found near optimal separation rates that show that the difficulty of the problem depends intricately on the specific null hypothesis; see also \citet{DK2016} and the survey article \citet{BW2017}. The problem has also been considered in the non-local differentially private setting \citep{WLK2015,GLRV2016,CDK2017,ADR2018,ASZ2018}. Upper bounds have been provided, which have been shown to be nearly optimal in certain regimes for the case of a uniform null. Besides goodness-of-fit, there is also work in this setting on testing independence between discrete variables \citep{WLK2015,GLRV2016}, and in testing independence between a discrete and a continuous variable by constructing differentially private versions of classical rank-based tests \citep{CKSBG2019}.

%\begin{itemize}
%	\item \citet{WLK2015,GLRV2016} analyse the problems of goodness-of-fit testing and independence testing with discrete data, under a (global) differential privacy constraint. In both papers, the authors consider $\chi^2$ statistics calculating on (Laplace) noisy versions of contingency tables.
%	\item \citet{CKSBG2019} studies (globally) differentially private tests of independence between a categorical and a continuous variable. They develop private versions of the classical rank-based tests of Kruskall--Wallis, Mann--Whitney and Wilcoxon. Laplace noise is added to standard test-statistics.
	%\item \citet{ACFT2019} considers uniformity testing under local differential privacy. They provide upper bounds that match our more general upper bounds. \citet{ACT2018} provides lower bounds for uniformity testing which use some similar ideas to our general lower bounds (there seem to be gaps in the lower bounds, they haven't shown that their constructions are non-negative, and so they have log factors missing that should be there). Faster rates are possible when the null is non-uniform, as we establish in this paper.
%\end{itemize}

\section{Preliminaries}

Let $\mathcal{P}_d = \{ p=(p(1),\ldots,p(d)) \in [0,1]^d : \sum_{j=1}^d p(j) =1 \}$ denote the set of all probability vectors in $d$-dimensions. For $x=(x(1),\ldots,x(d)) \in \mathbb{R}^d$ write $\|x\|_1= \sum_{j=1}^d |x(j)|$ for the $\ell_1$-norm and $\|x\|_2$ for the Euclidean norm of $x$. For $p \in \mathcal{P}_d$ we say that $X$ is distributed according the probability $p$, $X \sim p$, if $\mathbb{P}(X=j)=p(j)$ for each $j=1,\ldots,d$. Given $p_0 \in \mathcal{P}_d$, $\delta >0$ and data $X_1,\ldots,X_n \overset{\mathrm{i.i.d}}{\sim} p$ we will study the $ \mathbb{L}_1$ and $ \mathbb{L}_2$ problems of testing the hypotheses:
\begin{equation}
\label{Eq:Hypotheses}
	H_0: p=p_0 \quad \text{vs.} \quad H_1(\delta, \mathbb{L}_r): \left\{ p \in \mathcal{P}_d \text{ such that } \|p-p_0\|_r \geq \delta \right\},
\end{equation}
%and
%\begin{equation}
%\label{Eq:Hypotheses2}
%	H_0: p=p_0 \quad \text{vs.} \quad H_1(\delta, \mathbb{L}_2): \left\{ p \in \mathcal{P}_d \text{ such that } \|p-p_0\|_2 \geq \delta \right\},
%\end{equation}
for $r$ equal to 1 and 2 respectively, under an $\alpha$-local differential privacy (LDP) constraint on the allowable tests. An $\alpha$-LDP privacy mechanism $Q$ generates private data $Z_i$ taking values in $\mathcal{Z}$ via the conditional distribution $Q_i(\cdot|x_i,z_1,...,z_{i-1})$ such that
 \[
 \sup_A \sup_{z_1,...,z_{i-1}} \sup_{x,x'} \frac{Q_i(A|x,z_1,...,z_{i-1})}{Q_i(A|x',z_1,...,z_{i-1})} \leq e^\alpha, \text{ for all } i=1,...,n.
 \]
 Given an $\alpha$-LDP privacy mechanism $Q$, let $\Phi_Q = \{\phi: \mathcal{Z}^n \rightarrow [0,1] \}$ denote the set of all (randomized) tests of the hypotheses~\eqref{Eq:Hypotheses} based on $Z_1,\ldots,Z_n$. We can define the minimax testing risk when using the privacy mechanism $Q$ by
\[
	\mathcal{R}_n(p_0,\delta,Q, \mathbb{L}_r) := \inf_{\phi \in \Phi_Q} \sup_{p \in H_1( \delta, \mathbb{L}_r )} \bigl\{ \mathbb{E}_{p_0} (\phi) + \mathbb{E}_p(1-\phi) \bigr\}.
\]
 %\textcolor{blue}{try to interpolate $r \in [1,2]$?}
Then, writing $\mathcal{Q}_\alpha$ for the collection of all $\alpha$-LDP sequentially-interactive privacy mechanisms, we can define the $\alpha$-LDP minimax testing risk of~\eqref{Eq:Hypotheses} by
\[
	\mathcal{R}_{n,\alpha}(p_0,\delta, \mathbb{L}_r) := \inf_{Q \in \mathcal{Q}_\alpha} \mathcal{R}_n(p_0,\delta,Q, \mathbb{L}_r).
\]
Given $\gamma \in (0,1)$ we aim to find the $\alpha$-LDP minimax testing radius defined by
\[
	\mathcal{E}_{n,\alpha}(p_0, \mathbb{L}_r) := \inf \{ \delta >0 : \mathcal{R}_{n,\alpha}(p_0,\delta, \mathbb{L}_r) \leq \gamma \}
\]
Moreover, we will also aim to find the minimax testing risk of~\eqref{Eq:Hypotheses} under the additional restriction that $Q$ is a non-interactive $\alpha$-LDP privacy mechanism. Letting $\mathcal{Q}_\alpha^\mathrm{NI} \subset \mathcal{Q}_\alpha$ denote the subset of all such privacy mechanisms, we similary define
\[
	\mathcal{R}_{n,\alpha}^\mathrm{NI}(p_0,\delta, \mathbb{L}_r) := \inf_{Q \in \mathcal{Q}_\alpha^\mathrm{NI}} \mathcal{R}_n(p_0,\delta,Q, \mathbb{L}_r), \quad \mathcal{E}_{n,\alpha}^\mathrm{NI}(p_0, \mathbb{L}_r) := \inf \{ \delta >0 : \mathcal{R}_{n,\alpha}^\mathrm{NI}(p_0,\delta, \mathbb{L}_r) \leq \gamma \}.
\]
%and
%\[
%	\mathcal{E}_{n,\alpha}^\mathrm{NI}(p_0, \mathbb{L}_r) := \inf \{ \delta >0 : \mathcal{R}_{n,\alpha}^\mathrm{NI}(p_0,\delta, \mathbb{L}_r) \leq \gamma \}.
%\]
Note that this formalism is equivalent to finding the smallest sample size $n$ in order to attain a given accuracy, i.e. testing risk measure.

%\textcolor{green}{notation $\lesssim, \asymp$ etc.;\\
%
%Description of the results\\
%}

\section{Building private samples and optimal test procedures}

We split the support of the multinomial distribution $p_0$ into a main set $B$ and a tail set $ B^c$. As is now typical in such problems, we combine a $\chi^2$ test on $B$ (which is not the usual corrected $\chi^2$ statistic) with a tail-test on $B^c$ in order to achieve optimality.

These tests procedures use privatized data. Our non-interactive privacy mechanisms use classical Laplace randomization -- see, for example, \cite{DJW2018}. The interactive procedure involved in the $\chi^2$-test is novel in the context of discrete distributions; see \cite{BRS2020} for a similar mechanism in the continuous setting. It is a two-step procedure, that uses part of the sample in order to estimate the frequencies $\hat p_j$ and then randomizes the other part of the sample using a censored value of $\hat p_j - p_0(j)$. A simple average of this second part of the private sample allows us to construct the $\chi^2$-test. Thus the latter procedure encodes partial information on the distribution in the randomization of the second part of the sample and benefits from it.

\subsection{Non-interactive privacy mechanisms}

Assume that the sample size is even and that the data is given by $X_1,\ldots,X_{2n}$. Given a nonempty subset $B \subseteq [d]$ we define a non-interactive privacy mechanism $Q_B \in \mathcal{Q}_\alpha^\mathrm{NI}$ and a test $\phi_B \in \Phi_{Q_B}$ as follows. Given an i.i.d. sequence $(W_{ij})_{i\in[n],j\in B}$ of $\mathrm{Laplace}(1)$ random variables, for each $i\in[n]$ and $j \in B$ write
\begin{equation}
\label{Eq:BasicPrivacyMechanism}
	Z_{ij} := \mathbbm{1}_{\{X_i=j\}} + \frac{2}{\alpha} W_{ij}.
\end{equation}
We have that $(Z_{ij})_{i\in[n],j\in B}$ is an $\alpha$-LDP version of $X_1,\ldots,X_n$ \citep[see, e.g.,][]{GR2017}. Now write the $\chi^2$ test statistic
\[
	S_B:= \sum_{j \in B} \frac{1}{n(n-1)} \sum_{i_1 \neq i_2} \{Z_{i_1j} - p_0(j)\}\{ Z_{i_2 j} - p_0(j) \}.
\]
Letting $(W_i)_{i=n+1}^{2n}$ denote a second sequence of i.i.d. $\mathrm{Laplace}(1)$ random variables, for $i=n+1,\ldots,2n$ we set
\[
	Z_i = \mathbbm{1}_{\{ X_i \in B^c\}} + \frac{2}{\alpha} W_i.
\]
Then again $(Z_i)_{i=n+1}^{2n}$ is an $\alpha$-LDP version of $X_{n+1},\ldots,X_{2n}$ version of $X_{n+1},\ldots,X_{2n}$. Further, define
\[
	T_B:= \frac{1}{n} \sum_{i=n+1}^{2n} \{Z_i - p_0(B^c)\},
\]
for the tail test statistic, where we write $p_0(B^c) = \sum_{j \in B^c} p_0(j)$. With the critical values $C_{1,B}:=\{ 656 |B|/ (n(n-1) \alpha^4 \gamma ) \}^{1/2} $ and $C_{2,B}:= 6 /(n \alpha^2 \gamma)^{1/2} $, we finally set
\[
	\phi_B(Z_1,\ldots,Z_{2n}):= \left\{ \begin{array}{cc} 1 & \text{if } S_B \geq C_{1,B} \text{ and/or } T_B \geq C_{2,B} \\ 0 & \text{otherwise} \end{array} \right. ,
\]
that is, we reject $H_0$ if either $S_B \geq C_{1,B}$ or $T_B \geq C_{2,B}$.

\begin{thm}
\label{Thm:NonIntUpperBound}
When $\alpha \in (0,1]$, for any $\emptyset \neq B \subseteq [d]$ we have that
\[
	\mathcal{E}^\mathrm{NI}_{n,\alpha}(p_0, \mathbb{L}_1)  \leq 8 \max\biggl[ 12 \Bigl\{ \frac{|B|^3}{n(n-1) \alpha^4 \gamma^2} \Bigr\}^{1/4}, \, p_0(B^c) \biggr]. %\leq 2 \max \biggl[ \Bigl\{ \frac{24^4 |B|^3}{n(n-1) \alpha^4 \gamma^2} \Bigr\}^{1/4}, 2p_0(B^c) + \frac{6 + 3 \sqrt{2}}{(n \alpha^2 \gamma)^{1/2}} \biggr] \leq 8 \max\biggl[ 12 \Bigl\{ \frac{|B|^3}{n(n-1) \alpha^4 \gamma^2} \Bigr\}^{1/4}, p_0(B^c) \biggr]
\]
and
\[
	\mathcal{E}^\mathrm{NI}_{n,\alpha}(p_0, \mathbb{L}_2)  \leq 8 \max\biggl[ 12 \Bigl\{ \frac{|B|}{n(n-1) \alpha^4 \gamma^2} \Bigr\}^{1/4}, \, p_0(B^c) \biggr]. %\leq 2 \max \biggl[ \Bigl\{ \frac{24^4 |B|^3}{n(n-1) \alpha^4 \gamma^2} \Bigr\}^{1/4}, 2p_0(B^c) + \frac{6 + 3 \sqrt{2}}{(n \alpha^2 \gamma)^{1/2}} \biggr] \leq 8 \max\biggl[ 12 \Bigl\{ \frac{|B|^3}{n(n-1) \alpha^4 \gamma^2} \Bigr\}^{1/4}, p_0(B^c) \biggr].
\]
\end{thm}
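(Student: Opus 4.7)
The plan is to verify, for any $\delta$ strictly exceeding the right-hand side of either displayed inequality, that the pair $(Q_B,\phi_B)$ satisfies $\mathbb{E}_{p_0}[\phi_B] + \sup_{p \in H_1(\delta,\mathbb{L}_r)}\mathbb{E}_p[1-\phi_B] \leq \gamma$, which by definition of $\mathcal{R}^{\mathrm{NI}}_{n,\alpha}$ yields $\mathcal{E}^{\mathrm{NI}}_{n,\alpha}(p_0,\mathbb{L}_r) \leq \delta$. I would allocate $\gamma/2$ to the Type I error and $\gamma/2$ to the Type II error, further splitting the former into $\gamma/4$ for each of $\{S_B \geq C_{1,B}\}$ and $\{T_B \geq C_{2,B}\}$ by a union bound.

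The first step is a moment analysis at the null. For any $p$, $\mathbb{E}_p[Z_{ij}]=p(j)$ and, since the $\mathrm{Laplace}(1)$ noise has variance $2$ and is independent of $X_i$, $\mathrm{Var}_p(Z_{ij}) = p(j)(1-p(j)) + 8/\alpha^2 \leq 9/\alpha^2$ when $\alpha \in (0,1]$. The statistic $S_B$ is a U-statistic of order two with kernel $h(z,z') = \sum_{j \in B}(z_j - p_0(j))(z'_j - p_0(j))$, so $\mathbb{E}_p[S_B] = \sum_{j \in B}(p(j)-p_0(j))^2$; at $p=p_0$ we have $\mathbb{E}_{p_0}[h(Z_1,Z_2) \mid Z_1]=0$, i.e.\ the kernel is degenerate, so $\mathrm{Var}_{p_0}(S_B) = \tfrac{2}{n(n-1)}\mathbb{E}_{p_0}[h(Z_1,Z_2)^2]$. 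Expanding the square and using $\mathrm{Cov}(Z_{1j},Z_{1j'})=-p_0(j)p_0(j')$ for $j\neq j'$, I would obtain $\mathbb{E}_{p_0}[h(Z_1,Z_2)^2] \leq 81|B|/\alpha^4 + 1 \leq 82|B|/\alpha^4$ under $\alpha \leq 1$, hence $\mathrm{Var}_{p_0}(S_B) \leq 164|B|/[n(n-1)\alpha^4]$. Similarly $\mathrm{Var}_{p_0}(T_B) \leq 9/(n\alpha^2)$. Chebyshev's inequality with $C_{1,B}^2 = 656|B|/[n(n-1)\alpha^4\gamma]$ and $C_{2,B}^2 = 36/[n\alpha^2\gamma]$ then delivers $\gamma/4$ for each tail probability.

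For the Type II error I would fix $p$ with $\|p-p_0\|_r \geq \delta$ and case-split according to whether the distance is carried by $B$ or by $B^c$: at least one of $\sum_{j \in B}|p(j)-p_0(j)|^r$ and $\sum_{j \in B^c}|p(j)-p_0(j)|^r$ is at least $\delta^r/2$. In the \emph{first} case, Cauchy--Schwarz gives $\mathbb{E}_p[S_B] \geq \delta^2/(4|B|)$ for $r=1$ and directly $\mathbb{E}_p[S_B] \geq \delta^2/2$ for $r=2$; the lower bound on $\delta$ then makes $\mathbb{E}_p[S_B]$ a large explicit multiple of $C_{1,B}$. In the \emph{second} case, I would use the fact that $\delta \geq 8p_0(B^c)$ is built into the $\max$ on the right-hand side: for $r=1$, $\sum_{j\in B^c}|p(j)-p_0(j)|\geq \delta/2$ forces $p(B^c)+p_0(B^c)\geq \delta/2$ and hence $\mathbb{E}_p[T_B] = p(B^c)-p_0(B^c)\geq \delta/4$; for $r=2$, the chain $(p(B^c)+p_0(B^c))^2 \geq \sum_{j \in B^c}(p(j)+p_0(j))^2 \geq \sum_{j \in B^c}(p(j)-p_0(j))^2 \geq \delta^2/2$ gives $p(B^c)+p_0(B^c)\geq \delta/\sqrt{2}$ and, after subtracting $p_0(B^c)\leq \delta/8$, leaves $\mathbb{E}_p[T_B]$ a large multiple of $C_{2,B}$.

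The main obstacle is bounding $\mathrm{Var}_p(S_B)$ under the alternative, where the kernel is no longer degenerate. Writing $Y_{ij}=Z_{ij}-p_0(j)=\tilde Y_{ij}+\mu_j$ with $\mu_j=p(j)-p_0(j)$ and $\mathbb{E}_p[\tilde Y_{ij}]=0$, I would decompose $S_B$ into a degenerate U-statistic $\tfrac{1}{n(n-1)}\sum_{j \in B}\sum_{i_1 \neq i_2}\tilde Y_{i_1 j}\tilde Y_{i_2 j}$ (variance of order $|B|/[n(n-1)\alpha^4]$, handled as in the null calculation), a linear cross term $\tfrac{2}{n}\sum_{j \in B}\mu_j \sum_i \tilde Y_{ij}$ (variance of order $\|p-p_0\|_2^2/(n\alpha^2)$), and the deterministic piece $\mathbb{E}_p[S_B]$. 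Since $\|p-p_0\|_2\leq \|p-p_0\|_1\leq 2$, both random contributions are dominated by $\mathbb{E}_p[S_B]^2$ with enough slack, so Chebyshev returns the required $\gamma/2$ on the relevant event and the theorem follows for both $r=1$ and $r=2$.
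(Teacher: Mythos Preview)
Your plan is correct and matches the paper's proof essentially step for step: the same moment calculations for $S_B$ and $T_B$, the same Chebyshev bounds under $H_0$ (yielding $\gamma/4+\gamma/4$), and the same $B$ versus $B^c$ case split under $H_1$; the only cosmetic difference is that the paper computes $\mathrm{Var}_p S_B$ directly via Serfling's $U$-statistic variance formula rather than via your Hoeffding-type decomposition, arriving at the identical bound $\mathrm{Var}_p S_B \leq \tfrac{36}{n\alpha^2}\mathbb{E}_p[S_B] + \tfrac{164|B|}{n(n-1)\alpha^4}$. One small correction to your closing sentence: the linear-term variance $\asymp \mathbb{E}_p[S_B]/(n\alpha^2)$ is not controlled by the crude bound $\|p-p_0\|_2\le 2$ but rather by the fact that, on the ``signal in $B$'' event, $\mathbb{E}_p[S_B]\gtrsim |B|^{1/2}/(n\alpha^2\gamma)\ge 1/(n\alpha^2\gamma)$, so $\mathrm{Var}_p S_B/\mathbb{E}_p[S_B]^2\lesssim \gamma$---this is exactly the mechanism the paper uses.
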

Note that we can actually include discrete distributions on all of $\mathbb{N}$. We prove the tightest upper bounds by finding the sets $B$ that minimize the right-hand sides in Theorem~\ref{Thm:NonIntUpperBound}. The search algorithm is trivial if we order the sequence $p_0(\cdot)$ in decreasing order. Indeed, then it is straightforward to see that the optimal $B$ is of the form $\{1,\ldots,j\}$ in both cases, with the first term in the maximum increasing with $j$, and $p_0(B^C)$ decreasing with $j$. Therefore, there are always finite sets (possibly large) that minimize the right-hand sides.

Theorem~\ref{Thm:NonIntUpperBound} yields the following immediate corollary.

\begin{cor}
\label{Cor:UpperBound}
Let
\begin{align*}
	j_* &= j_*(n \alpha^2 , p_0, \mathbb{L}_1) := \min \biggl\{ j=1,\ldots,d : \frac{j^{3/4}}{(n \alpha^2)^{1/2}} \geq \sum_{j'=j+1}^d p_0(j') \biggr\}\\
	j_{**} &= j_{**}(n \alpha^2 , p_0, \mathbb{L}_2) := \min \biggl\{ j=1,\ldots,d : \frac{j^{1/4}}{(n \alpha^2)^{1/2}} \geq \sum_{j'=j+1}^d p_0(j') \biggr\}.
\end{align*}
When $\alpha \in (0,1]$, there exist $C_1=C_1(\gamma)$ and $C_2=C_2(\gamma)$ such that
\[
	\mathcal{E}^\mathrm{NI}_{n,\alpha}(p_0, \mathbb{L}_1) \leq C_1 \frac{j_*^{3/4}}{(n \alpha^2)^{1/2}} \quad \text{and} \quad
	\mathcal{E}^\mathrm{NI}_{n,\alpha}(p_0, \mathbb{L}_2)  \leq C_2 \frac{j_{**}^{1/4}}{(n \alpha^2)^{1/2}}.
\]
\end{cor}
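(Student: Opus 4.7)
The plan is to derive the corollary as an essentially immediate specialization of Theorem~\ref{Thm:NonIntUpperBound}, instantiating it with the prefix set suggested in the remark immediately following that theorem.

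First, I would observe that the minimax testing radius $\mathcal{E}^\mathrm{NI}_{n,\alpha}(p_0,\mathbb{L}_r)$ is invariant under any relabeling of the alphabet $[d]$, since such a relabeling acts by composition on $\mathcal{Q}_\alpha^\mathrm{NI}$ and preserves both $\ell_r$-norms and tail sums. This allows me to assume without loss of generality that $p_0(1)\geq p_0(2)\geq\cdots\geq p_0(d)$. I would then apply Theorem~\ref{Thm:NonIntUpperBound} with the choice $B=\{1,\ldots,j_*\}$ for the $\mathbb{L}_1$ case and $B=\{1,\ldots,j_{**}\}$ for the $\mathbb{L}_2$ case; both thresholds are well-defined, since at $j=d$ the right-hand sides of the defining inequalities vanish while the left-hand sides are positive.

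It then remains to control the two entries of the maximum in Theorem~\ref{Thm:NonIntUpperBound}. For the $\mathbb{L}_1$ case, using $n(n-1)\geq n^2/2$ for $n\geq 2$, the first entry satisfies
\[
    12\left\{\frac{j_*^3}{n(n-1)\alpha^4\gamma^2}\right\}^{1/4} \leq 12\cdot 2^{1/4}\gamma^{-1/2}\cdot\frac{j_*^{3/4}}{(n\alpha^2)^{1/2}},
\]
while, by the defining minimality of $j_*$,
\[
    p_0(B^c) = \sum_{j'=j_*+1}^d p_0(j') \leq \frac{j_*^{3/4}}{(n\alpha^2)^{1/2}}.
\]
Taking the maximum and multiplying by $8$ yields the claimed bound with $C_1(\gamma)$ proportional to $\gamma^{-1/2}$ (and at most a universal multiple thereof, since $\gamma^{-1/2}>1$ on $(0,1)$). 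The $\mathbb{L}_2$ bound follows by the identical computation with $j_{**}$ in place of $j_*$ and the exponent $3/4$ replaced by $1/4$ throughout.

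No step here presents a genuine obstacle, since all the substantive work has already been done in Theorem~\ref{Thm:NonIntUpperBound}. The only new content is the monotonicity observation flagged in the remark after the theorem, namely that for decreasingly-sorted $p_0$ the first entry of the max is increasing in $|B|$ along prefix sets while the second is decreasing, so the optimum is attained (up to constants) at the smallest index where the second entry no longer dominates the first --- which is precisely the threshold $j_*$, respectively $j_{**}$.
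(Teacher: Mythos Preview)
Your proposal is correct and matches the paper's approach exactly: the paper simply states that Corollary~\ref{Cor:UpperBound} is an immediate consequence of Theorem~\ref{Thm:NonIntUpperBound}, and your instantiation $B=\{1,\ldots,j_*\}$ (resp.\ $\{1,\ldots,j_{**}\}$) together with the bound $n(n-1)\geq n^2/2$ is precisely what makes it immediate. One minor remark: the relabeling step is unnecessary and, as stated, slightly delicate---$j_*$ is \emph{not} invariant under relabeling, so the ``without loss of generality'' requires the extra observation that sorting can only decrease $j_*$; it is cleaner to skip sorting entirely and take $B=\{1,\ldots,j_*\}$ in the given ordering, since the definition of $j_*$ already yields $p_0(B^c)=\sum_{j'>j_*}p_0(j')\leq j_*^{3/4}/(n\alpha^2)^{1/2}$ directly.
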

In particular, for testing the uniform distribution over $[d]$, this corollary shows in both cases a loss of a factor ${d}^{1/4}$ with respect to the minimax rates that we can attain without privacy.

In Corollary~\ref{Cor:UpperBound} we always have $j_*, \, j_{**} \leq d$, so we can always say that $\mathcal{E}^\mathrm{NI}_{n,\alpha}(p_0,\mathbb{L}_1) \lesssim d^{3/4}/(n \alpha^2)^{1/2}$ and that $\mathcal{E}^\mathrm{NI}_{n,\alpha}(p_0,\mathbb{L}_2) \lesssim d^{1/4}/(n \alpha^2)^{1/2}$. However, for some values of $p_0$ our upper bound is better than this.

It is important to note here that the $\mathbb{L}_1$ test behaves very differently in this context from the case of non private setup. It is known since \cite{VV2017}, see also \cite{BW2017}, that in the direct setup a weighted $\chi^2$-test is needed in order to attain the optimal rates. This is due to the heteroscedasticity of the multinomial model (the variances of the counts are proportional to their probabilities) and a correction for very small variances needs to be included. Unlike this setup, the privacy constraint induces an unavoidable homoscedastic term in the variance of the $\chi^2$-square test and makes the correction useless in this case, resulting in a loss in the rate. We will see in Section~\ref{Sec:LowerBounds} that these rates for the $\mathbb{L}_1$ problem are essentially optimal.

The $\mathbb{L}_2$ test also combines the $\chi^2$ and the tail tests in order to achieve nearly optimal rates and this is also in contrast with the non-private case where the $\chi^2$ test is sufficient. However, as we will describe in Section~\ref{Subsec:Examples}, for polynomially decreasing distributions there is a gap between our non-interactive upper and lower bounds in some cases. Nevertheless, our results in Sections~\ref{Sec:IntUpperBounds} and~\ref{Sec:LowerBounds} do demonstrate a significant gap between non-interactive and interactive rates, even in these settings.

%To minimise this upper bound, for a given cardinality $|B|$, we should always choose $B$ to include those $j$ for which $p_0(j)$ is largest. Supposing without loss of generality that $p_0(1) \geq p_0(2) \geq \ldots \geq p_0(d)$, a straightforward corollary of Theorem~\ref{Thm:NonIntUpperBound} is that
%\begin{equation}
%\label{Eq:Tradeoff}
%	\mathcal{E}_n(p_0,\alpha)  \leq 8 \min_{j=1,\ldots,d} \max \biggl[ 12 \Bigl\{ \frac{j^3}{n(n-1) \alpha^4 \gamma^2} \Bigr\}^{1/4}, \sum_{j'=j+1}^d p_0(j') \biggr].
%\end{equation}

\subsection{Interactive privacy mechanisms and faster rates}
\label{Sec:IntUpperBounds}

Assume here that the sample is split in 3 parts, or that the data is given by $X_1,\ldots,X_{3n}$. The data $X_1,...,X_{2n}$ is used to build the interactive test statistic $D_B$ as described hereafter, while the third part of the sample, $X_{2n+1},...,X_{3n}$, is used to build the same test statistic $T_B$ as in the noninteractive setup.

We define an interactive privacy mechanism $Q_\mathrm{I} \in \mathcal{Q}_\alpha$ and a test $\psi_B \in \Phi_{Q_\mathrm{I}}$ as follows. With the first half of the sample, as in~\eqref{Eq:BasicPrivacyMechanism} with $B=[d]$, generate an i.i.d. sequence $(W_{ij})_{i\in[n],j\in [d]}$ of $\mathrm{Laplace}(1)$ random variables, and for each $i\in[n]$ and $j \in [d]$ write
\[
	Z_{ij} := \mathbbm{1}_{\{X_i=j\}} + \frac{2}{\alpha} W_{ij}.
\]
We again have that $(Z_{ij})_{i\in[n],j\in [d]}$ is an $\alpha$-LDP version of $X_1,\ldots,X_n$. For each $j \in [d]$ set
\[
	\hat{p}_j:= \frac{1}{n} \sum_{i=1}^n Z_{ij}.
\]
Set $c_\alpha = \frac{e^\alpha+1}{e^\alpha-1}$ and $\tau = (n \alpha^2)^{-1/2}$. As for the second half of the sample, for each $i =n+1,\ldots,2n$, generate $Z_i$ taking values in $\{-c_\alpha \cdot \tau,c_\alpha \cdot \tau \}$ such that
\[
	\mathbb{P}(Z_i = c_\alpha \cdot \tau | X_i =j) = \frac{1}{2} \Bigl( 1 + \frac{[\hat{p}_j - p_0(j)]^\tau_{-\tau}}{c_\alpha \cdot \tau} \Bigr),
\]
where we denote by
$$
[v]_{-\tau}^\tau = (-\tau) \vee v \wedge \tau, \quad \text{for all } v \in \mathbb{R}
$$
the censoring operator.
Then $(Z_i)_{i=n+1,\ldots,2n}$ is an $\alpha$-LDP version of $X_{n+1},\ldots,X_{2n}$ \citep{BRS2020}. We then define the test statistic
\[
	D_n = \frac{1}{n} \sum_{i=n+1}^{2n} Z_i - \sum_{j =1}^d p_0(j) \{[\hat{p}_j - p_0(j)]^\tau_{-\tau} \}
\]
and $C_3:= \frac{e+1}{e-1}  \frac{(4/\gamma)^{1/2}}{n \alpha^2}$.
The final test is
\[
	\psi_B(Z_1,\ldots,Z_{3n}):= \left\{ \begin{array}{cc} 1 & \text{if } D_n \geq C_{3} \text{ and/or } T_B \geq C_{2,B} \\ 0 & \text{otherwise} \end{array} \right.,
\]
that is, we reject $H_0$ if either $D_n \geq C_{3}$ or $T_B \geq C_{2,B}$.

\begin{thm}
\label{Thm:IntUpperBound}
There exists a universal constant $C$ such that when $\alpha \in (0,1]$, for any $\emptyset \neq B \subseteq [d]$, we have that
\begin{align*}
	\mathcal{E}_{n,\alpha}(p_0, \mathbb{L}_1)  \leq C \max \biggl\{ \frac{|B|^{1/2}}{(n \alpha^2 \gamma^2)^{1/2}}, p_0(B^c) \biggr\} \quad \text{and} \quad \mathcal{E}_{n,\alpha}(p_0, \mathbb{L}_2)  \leq \frac{C}{(n \alpha^2 \gamma^2)^{1/2}}.
\end{align*}
In particular, let
\begin{align*}
	\tilde j &= \tilde j(n \alpha^2 , p_0, \mathbb{L}_1) := \min \biggl\{ j=1,\ldots,d : \frac{j^{1/2}}{(n \alpha^2)^{1/2}} \geq \sum_{j'=j+1}^d p_0(j') \biggr\}.
\end{align*}
Then there exists $C_1=C_1(\gamma)$ such that, when $\alpha \in (0,1]$, we have
\[
	\mathcal{E}_{n,\alpha}(p_0, \mathbb{L}_1) \leq C_1 \frac{\tilde j^{1/2}}{(n \alpha^2)^{1/2}}.
\]
\end{thm}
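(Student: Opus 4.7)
The plan is to bound the Type I and Type II error probabilities of the test $\psi_B$ separately, each by $\gamma/2$, and combine them via the union bound over the two rejection events $\{D_n \geq C_3\}$ and $\{T_B \geq C_{2,B}\}$. For Type I, under $H_0$ we have $\Delta_j := p(j) - p_0(j) \equiv 0$, so $\mathbb{E}_{p_0}[D_n \mid \hat p] = \sum_j \Delta_j [\hat p_j - p_0(j)]_{-\tau}^\tau = 0$ and hence $\mathbb{E}_{p_0}[D_n] = 0$. Since $|Z_i| \leq c_\alpha \tau$ for $i = n+1, \ldots, 2n$, the conditional variance satisfies $\mathrm{Var}(D_n \mid \hat p) \leq c_\alpha^2 \tau^2 / n = c_\alpha^2 / (n^2 \alpha^2)$. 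Using $c_\alpha \alpha \leq (e+1)/(e-1)$ for $\alpha \in (0,1]$, the critical value $C_3 = \frac{e+1}{e-1}(4/\gamma)^{1/2}/(n\alpha^2)$ is precisely calibrated so that Chebyshev's inequality yields $\mathbb{P}_{p_0}(D_n \geq C_3) \leq \gamma/4$. The tail statistic $T_B$ is handled exactly as in the non-interactive analysis of Theorem~\ref{Thm:NonIntUpperBound}.

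For Type II, I would split into cases based on where $\|p - p_0\|_r$ concentrates. If $p(B^c) - p_0(B^c) \geq 2 C_{2,B}$, then Chebyshev on $T_B$ gives rejection with probability $\geq 1 - \gamma/4$. Otherwise, since $\|\Delta_{B^c}\|_1 \leq p(B^c) + p_0(B^c) \leq 2 p_0(B^c) + 2 C_{2,B}$ and $p_0(B^c) \lesssim \delta$ by the $\max$ in the stated bound, the bulk of the distance lies on $B$. For $r = 1$, combining Cauchy--Schwarz $\|\Delta_B\|_1 \leq |B|^{1/2}\|\Delta_B\|_2$ with the tail estimate reduces the problem to $\|\Delta\|_2 \geq \|\Delta_B\|_2 \gtrsim \tau/\gamma$; for $r = 2$, taking $B = [d]$ makes $T_B$ vacuous and the hypothesis $\|\Delta\|_2 \geq C\tau/\gamma$ is already of this form. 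In both cases, it remains to show $D_n \geq C_3$ with probability $\geq 1-\gamma/4$ whenever $\|\Delta\|_2 \gtrsim \tau/\gamma$.

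The key analytic step is to control $\mu(\hat p) := \mathbb{E}[D_n \mid \hat p] = \sum_j \Delta_j [\hat p_j - p_0(j)]_{-\tau}^\tau$ relative to its deterministic benchmark $\mu(p) = \sum_j \Delta_j [\Delta_j]_{-\tau}^\tau = \sum_j |\Delta_j| \min(|\Delta_j|, \tau)$. Using the identity $|\Delta_j|\min(|\Delta_j|, \tau) = \Delta_j^2 \min(1, \tau/|\Delta_j|)$ together with $\|\Delta\|_\infty \leq \|\Delta\|_2$, one obtains the dimension-free lower bound $\mu(p) \geq \min(\|\Delta\|_2^2, \tau\|\Delta\|_2) \gtrsim \tau^2/\gamma$, which comfortably exceeds $C_3 \sim \tau^2/\sqrt{\gamma}$. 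I would then use a two-level argument: first, with probability $\geq 1 - \gamma/8$, show $\mu(\hat p) \geq 2 C_3$; second, on this event, Chebyshev with the conditional variance $\mathrm{Var}(D_n \mid \hat p) \leq c_\alpha^2 \tau^2/n$ and the calibration of $C_3$ gives $\mathbb{P}(D_n < C_3 \mid \hat p) \leq \gamma/8$.

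The hard part will be controlling the fluctuations of $\mu(\hat p)$ around $\mu(p)$ in a dimension-free way. A crude Cauchy-Schwarz bound $|\mu(\hat p) - \mu(p)| \leq (\sum_j \Delta_j^2)^{1/2} (\sum_j \xi_j^2)^{1/2}$ with $\xi_j = \hat p_j - p(j)$ introduces an unwanted $d^{1/2}$ factor through $\mathbb{E}\|\xi\|_2^2 \sim d\tau^2$, while the trivial bound $|\mu(\hat p)| \leq \tau \|\Delta\|_1$ is too weak. The correct treatment would separate indices with $|\Delta_j| > 2\tau$, where the $1$-Lipschitz censoring flattens $[\hat p_j - p_0(j)]_{-\tau}^\tau$ to the constant $\tau \, \mathrm{sign}(\Delta_j)$ on the high-probability event $|\xi_j| \leq |\Delta_j| - \tau$, from those with $|\Delta_j| \leq 2\tau$, where a linear approximation around $\xi_j$ combined with the negative multinomial covariance $\mathrm{Cov}(\xi_j, \xi_k) = -p(j)p(k)/n$ and the independent Laplace noise yields $\mathrm{Var}(\sum_j \Delta_j \xi_j) \lesssim \tau^2 \|\Delta\|_2^2$. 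Combining the two regimes should give a variance bound on $\mu(\hat p)$ of order $\tau^2 \|\Delta\|_2^2$, which is enough to close the final Chebyshev step and complete the proof.
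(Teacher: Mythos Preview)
Your outline tracks the paper's proof closely in structure, but there is a real gap in the variance step that, as written, prevents the Chebyshev argument from closing.

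The claimed bound $\mathrm{Var}\bigl(\mu(\hat p)\bigr)\lesssim \tau^2\|\Delta\|_2^2$ is not enough in the regime $\|\Delta\|_2>\tau$: there your own signal lower bound gives only $\mu(p)\geq\tau\|\Delta\|_2$, so $\tau^2\|\Delta\|_2^2/\mu(p)^2\le 1$ and Chebyshev yields nothing. What is actually required---and what the paper proves---is the sharper bound $\mathrm{Var}\,D_n\lesssim \tau^4+\tau^2 D_\tau(p)$ with $D_\tau(p):=\sum_j|\Delta_j|\min(|\Delta_j|,\tau)=\mu(p)$, which gives $\tau^2 D_\tau(p)/\mu(p)^2=\tau^2/D_\tau(p)\lesssim\gamma$. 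Getting $D_\tau(p)$ instead of $\|\Delta\|_2^2$ demands that the large-$|\Delta_j|$ contribution be handled through \emph{second moments}, not through an intersection of high-probability events: the paper shows the interpolating bound $\mathrm{Var}\bigl([\hat p_j-p_0(j)]_{-\tau}^\tau\bigr)\le \tfrac{16}{n\alpha^2}\exp\bigl(-n\alpha^2\Delta_j^2/128\bigr)$ for \emph{every} $j$, controls the off-diagonal covariances by an analogous telescoping computation, and then uses $\sup_{x\ge 0}xe^{-x^2/128}/(x\wedge 1)<\infty$ to convert $\sum_j\Delta_j^2 e^{-n\alpha^2\Delta_j^2/128}$ into $D_\tau(p)$. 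Your event-based treatment would also run into trouble with a union bound over $j$'s near the threshold $|\Delta_j|\approx 2\tau$, where the individual ``flattening'' events have probability bounded away from one.

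A second, smaller point: the paper does not use a two-level conditional argument. It applies Chebyshev to $D_n$ directly, lower-bounding the unconditional mean $\mathbb{E}D_n\ge c\,D_\tau(p)$ via a \emph{sign-consistency} trick: since $\Delta_j[\hat p_j-p_0(j)]_{-\tau}^\tau\ge |\Delta_j|\min(\tau,|\Delta_j|)$ whenever $\mathrm{sign}(\hat p_j-p_0(j))=\mathrm{sign}(\Delta_j)$, it suffices to bound this sign-agreement probability uniformly below by a constant, which follows from separating the Laplace noise and the multinomial fluctuation. This replaces your comparison of $\mu(\hat p)$ with the non-centered benchmark $\mu(p)$ and is both shorter and avoids the issue that $\mathbb{E}\mu(\hat p)\ne\mu(p)$.
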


%When $d \gg n\alpha^2$ then the upper bounds obtained in Theorem~\ref{Thm:IntUpperBound} are better than the bounds given by Theorem~\ref{Thm:NonIntUpperBound} for the $ \mathbb{L}_2$ test problem. This phenomenon is similar to the $ \mathbb{L}_2 $ test results for smooth probability density functions in \cite{BRS2020}. However, we note that for the $ \mathbb{L}_1$ test in Theorem~\ref{Thm:IntUpperBound} are better than the rates in  Theorem~\ref{Thm:NonIntUpperBound} under the condition that $d \geq n \alpha^2$ which is incompatible with the consistency condition that $d \leq \sqrt{n \alpha^2}$.

\section{Non-asymptotic optimality}
\label{Sec:LowerBounds}

Attaining the rates through a particular randomization of the original sample and an associated test scheme does not prevent us from trying to improve on these choices. Instead, our lower bound results show that there are no better choices of privacy mechanisms and test procedures that would improve the test risk (or the separation rate) uniformly over the set of discrete distributions. It is of particular interest to show that no other $\alpha$-LDP Markov kernels could be combined with any of the tests to improve on the upper bounds of our rates. There are however multiple choices of such couples leading to the optimal rates that we have described.

Proving the optimality of our methods consists of building a family $\{p_\xi: \xi \in \mathcal{V} \}$ that belongs to the alternative set of probability distributions $H_1(\delta)$ with high probability and then reducing the test problem to testing between $p_0$ under the null and the mixture of the $p_\xi$ under the alternative.
\begin{prop}\label{Prop:LowerBound} If $\{p_\xi: \xi \in \mathcal{V} \}$ is a family of distributions such that
\[
P_\xi (p_\xi \not \in H_1(\delta)) \leq \gamma_1, \quad \text{for some } \gamma_1>0.
\]
then, for arbitrary $\eta$ in (0,1), we have
\[
\mathcal{R}_{n,\alpha}(p_0,\delta) \geq \inf_{Q \in \mathcal{Q}_\alpha} (1- \eta) \left( 1 - \frac 1\eta TV(QP_0^n, E_\xi QP_\xi^n ) \right) - \gamma_1.
\]
\end{prop}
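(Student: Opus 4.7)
The plan is to reduce the minimax testing risk to a two-hypotheses Bayes-style risk by averaging the alternatives over the random family $\{p_\xi : \xi \in \mathcal{V}\}$, and then to lower bound the resulting quantity via a Markov-type inequality on the averaged likelihood ratio. This is a Le Cam-style argument, but phrased in the slightly weaker variational form $(1-\eta)(1 - TV/\eta)$ rather than $1 - TV$, in order to keep a free parameter $\eta$ that can be tuned later against upper bounds on $TV(QP_0^n, E_\xi QP_\xi^n)$.

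First I will fix $Q \in \mathcal{Q}_\alpha$ and a test $\phi \in \Phi_Q$. For the type-II error I will observe that $\sup_{p \in H_1(\delta)} E_p(1-\phi)$ is lower bounded by $E_\xi[E_{p_\xi}(1-\phi)\mathbbm{1}\{p_\xi \in H_1(\delta)\}]$, since the latter is an integral of values $E_{p_\xi}(1-\phi)$ against a nonnegative measure of total mass at most one, each value being $\leq$ the supremum. Splitting $E_\xi E_{p_\xi}(1-\phi)$ according to whether $p_\xi \in H_1(\delta)$ and using $1-\phi \leq 1$ together with the hypothesis $P_\xi(p_\xi \notin H_1(\delta)) \leq \gamma_1$ yields
\[
\sup_{p \in H_1(\delta)} E_p(1-\phi) \geq E_\xi E_{p_\xi}(1-\phi) - \gamma_1.
\]
Writing $\mu_0 = QP_0^n$ and $\mu_1 = E_\xi QP_\xi^n$ for the distribution of the private sample under the null and under the mixture alternative, the task reduces to lower bounding $\int \phi\,d\mu_0 + \int(1-\phi)\,d\mu_1$ uniformly in $\phi : \mathcal{Z}^n \to [0,1]$.

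Second, I will prove $\int \phi\,d\mu_0 + \int(1-\phi)\,d\mu_1 \geq (1-\eta)(1 - TV(\mu_0,\mu_1)/\eta)$ by a direct Markov argument on the mixture likelihood ratio. Passing to a dominating measure if necessary and writing $g = d\mu_1/d\mu_0$, the sum equals $\int [\phi + (1-\phi)g]\,d\mu_0$. On the event $A = \{g \geq 1-\eta\}$, the integrand satisfies $\phi + (1-\phi)g \geq \eta\phi + (1-\eta) \geq 1-\eta$, so the integral is at least $(1-\eta)\mu_0(A)$. Markov's inequality gives $\mu_0(A^c) = P_{\mu_0}(1-g > \eta) \leq E_{\mu_0}[(1-g)^+]/\eta$, and the identity $E_{\mu_0}[(1-g)^+] = E_{\mu_0}[(g-1)^+] = \tfrac12 E_{\mu_0}|g-1| = TV(\mu_0,\mu_1)$, which follows from $E_{\mu_0}(g-1)=0$, then produces $\mu_0(A) \geq 1 - TV(\mu_0,\mu_1)/\eta$ and hence the claimed bound. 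Taking the infimum over $\phi$, combining with the first step, and taking the infimum over $Q \in \mathcal{Q}_\alpha$ delivers the statement.

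The main obstacle is essentially bookkeeping rather than substance: one needs to be careful that the parameter $\eta$ and the infimum over $Q$ appear in the correct order so that $\eta$ remains a free tuning parameter against TV-type bounds uniform in $Q$, and one must verify that the weighted Bayes lower bound still applies when the family $\{p_\xi\}$ only lies in $H_1(\delta)$ with probability $1 - \gamma_1$ rather than almost surely, which is handled cleanly by the $\gamma_1$-slack in step one.
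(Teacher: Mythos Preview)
Your proof is correct and follows essentially the same route as the paper: both reduce the supremum over $H_1(\delta)$ to the mixture $E_\xi QP_\xi^n$ at the cost of $\gamma_1$, then lower bound $\int\phi\,d\mu_0+\int(1-\phi)\,d\mu_1$ by $(1-\eta)\,\mu_0(g\geq 1-\eta)$ and finish with the Markov bound $\mu_0(g<1-\eta)\leq E_{\mu_0}[(1-g)^+]/\eta = TV(\mu_0,\mu_1)/\eta$. The only cosmetic difference is that the paper writes the change of measure via $E_\xi\,\mathbb{E}_{p_0}\bigl[\tfrac{QP_\xi^n}{QP_0^n}(1-\phi)\bigr]$ before restricting to the event $\{g\geq 1-\eta\}$, whereas you phrase the same step directly in terms of $\mu_1=E_\xi QP_\xi^n$ and its Radon--Nikodym derivative $g$; your version also spells out the identity $E_{\mu_0}[(1-g)^+]=TV(\mu_0,\mu_1)$ that the paper uses without comment.
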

It is sufficient to show that $TV(QP_0^n, E_\xi QP_\xi^n )  \leq \eta \cdot \gamma_2$ such that $(1-\eta)(1-\gamma_2)-\gamma_1 \geq \gamma$. Standard inequalities
%\[
%TV(QP_0^n, E_\xi QP_\xi^n) \leq \sqrt{KL(QP_0^n, E_\xi QP_\xi^n)} \leq \sqrt{\chi^2(QP_0^n, E_\xi QP_\xi^n)},
%\]
prove that it is sufficient to bound from above the Kullback--Leibler or the $\chi^2$ discrepancy between the private distribution under the null and the average of conveniently chosen private distributions under the set of alternatives.

The way the previous discrepancies relate to the underlying distributions of the data proves to be significantly different in the cases when we are constrained to use non-interactive privacy mechanisms only, and when we are allowed to use any privacy mechanism.

\bigskip

\noindent {\bf Information theoretical bounds for testing.} For maximal generality, we assume that the privacy mechanisms may act differently on each sample $X_i$. An interactive procedure acts through $q_i(z_i|X_i=j, z_1,...,z_{i-1})$ on $X_i$ and the resulting $Z_i$ is distributed, conditionally on $Z_1,...,Z_{i-1}$, according to $m_i^\xi (z_i|Z_1,...,Z_{i-1}) = p_\xi^\top q_i(z_i|\cdot , Z_1,...,Z_{i-1})$. A non-interactive procedure acts simply through $q_i(z_i|X_i=j)$ on $X_i$ and the resulting $Z_i$ is distributed according to $m_i^\xi(z_i) = p_\xi^T q_i(z_i | \cdot)$.

\begin{thm} \label{Thm:InfoBounds} Given the previous family of distributions $\{p_\xi: \xi \in \mathcal{V}\}$, we have
\[
KL(QP_0^n, E_\xi QP_\xi^n) \leq E_\xi \left[(p_\xi - p_0)^\top \Omega (p_\xi - p_0) \right],
\]
where the matrix $\Omega$ has elements
\[
\Omega_{j,k} \! = \!\! \sum_{i=1}^n \E_{p_0} \!\! \int \!\! \left(\frac{q_i(z_i|j,Z_1,...,Z_{i-1})}{m_i^0(z_i|Z_1,...,Z_{i-1})}-1 \right) \!\! \left(\frac{q_i(z_i|k,Z_1,...,Z_{i-1})}{m_i^0(z_i|Z_1,...,Z_{i-1})}-1 \right) \! m_i^0(z_i|Z_1,...,Z_{i-1}) dz_i.
\]
In the particular case of non-interactive privacy mechanisms, we have for independent copies $\xi,\, \xi'$
\[
\chi^2(QP_0^n, E_\xi QP_\xi^n) \leq E_{\xi, \xi'} \left[\exp\left( (p_\xi - p_0)^\top \Omega (p_{\xi'} - p_0)\right) \right] - 1,
\]
where $\Omega$ takes the simpler form
\[
\Omega_{j,k} = \sum_{i=1}^n \int \left(\frac{q_i(z_i|j)}{m_i^0(z_i)}-1 \right) \left(\frac{q_i(z_i|k)}{m_i^0(z_i )}-1 \right) m_i^0(z_i ) dz_i.
\]
\end{thm}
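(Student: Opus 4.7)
The plan is to handle the two inequalities separately. The non-interactive bound rests on a direct $\chi^2$ computation exploiting the product structure of $QP_\xi^n$, while the interactive KL bound is obtained via convexity plus the KL chain rule, together with a local inequality for $-\log$ that exploits the $\alpha$-LDP bound on likelihood ratios.

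For the non-interactive case, I would start from $\int (E_\xi QP_\xi^n)^2 / QP_0^n \, dz - 1$ (the $\chi^2$ of the mixture against the null), write the square as an expectation over an independent pair $\xi, \xi'$, and use the factorized form $QP_\xi^n = \prod_i m_i^\xi$ to reduce to $E_{\xi, \xi'} \prod_i \int m_i^\xi m_i^{\xi'} / m_i^0 \, dz_i - 1$. Each factor expands as $1 + \int (m_i^\xi - m_i^0)(m_i^{\xi'} - m_i^0) / m_i^0 \, dz_i$ using $\int m_i^\xi = \int m_i^{\xi'} = 1$. Substituting $m_i^\xi - m_i^0 = \sum_j (p_\xi(j) - p_0(j)) q_i(\cdot \mid j)$ and using $\sum_j (p_\xi(j) - p_0(j)) = 0$ to freely shift each $q_i / m_i^0$ by $-1$, the integral reassembles into the quadratic form $(p_\xi - p_0)^\top \Omega^{(i)} (p_{\xi'} - p_0)$ for the per-sample matrix $\Omega^{(i)}$. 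Finally $1 + x \leq e^x$ bundles the product into the exponential form with $\Omega = \sum_i \Omega^{(i)}$.

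For the interactive case, I would first use joint convexity of KL in its second argument to strip the mixture,
\[
KL(QP_0^n, E_\xi QP_\xi^n) \leq E_\xi KL(QP_0^n, QP_\xi^n),
\]
and then invoke the KL chain rule along the sequential protocol,
\[
KL(QP_0^n, QP_\xi^n) = \sum_{i=1}^n E_{QP_0^{i-1}} \bigl[ KL\bigl( m_i^0(\cdot \mid Z_{1:i-1}),\, m_i^\xi(\cdot \mid Z_{1:i-1}) \bigr) \bigr].
\]
For each conditional KL, setting $u = (m_i^\xi - m_i^0) / m_i^0$ and noting $\int m_i^0 u \, dz_i = 0$, I would use the pointwise inequality $-\log(1 + u) \leq u^2 - u$ and integrate against $m_i^0$ to get $KL(m_i^0, m_i^\xi) \leq \int (m_i^\xi - m_i^0)^2 / m_i^0 \, dz_i$. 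The same algebraic manipulation as in the non-interactive case rewrites the right-hand side as $(p_\xi - p_0)^\top \Omega^{(i)}(Z_{1:i-1}) (p_\xi - p_0)$, and summing over $i$ and taking the outer $P_0$-expectation reassembles the full $\Omega$.

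The main obstacle is the direction of the chi-square bound in the interactive step. The textbook inequality $KL(P, Q) \leq \chi^2(P, Q)$ has $Q$ in the denominator, whereas $\Omega$ is built with $m_i^0$ in the denominator, i.e.\ with $\chi^2(m_i^\xi, m_i^0)$, and the naive swap $KL(P_0, P_\xi) \leq \chi^2(P_\xi, P_0)$ fails in general (Bernoulli examples with a vanishing probability make the right-hand side bounded while the left blows up). What rescues the argument is the $\alpha$-LDP constraint: for $\alpha \leq 1$ the likelihood ratio $m_i^\xi / m_i^0$ is pinned in $[e^{-\alpha}, e^\alpha]$, so $u$ stays in a range where the pointwise inequality $-\log(1+u) \leq u^2 - u$ (equivalently $f(u) := -\log(1+u) + u - u^2 \leq 0$) is valid; outside that range this inequality fails. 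This is precisely the step where the privacy constraint is consumed by the information-theoretic lower bound.
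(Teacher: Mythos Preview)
Your proposal is correct and follows the same route as the paper: for the interactive bound, convexity of $KL$ in its second argument to strip the mixture, the sequential chain rule, and the per-step inequality $-\log(1+u)\le u^2-u$ (integrated against $m_i^0$, so the linear term vanishes); for the non-interactive bound, the $\chi^2$ expansion over an independent pair $(\xi,\xi')$, factorization, and $1+x\le e^x$. Your observation that the pointwise bound $-\log(1+u)\le u^2-u$ fails for $u$ near $-1$ and is rescued by the $\alpha$-LDP ratio constraint is a genuine refinement---the paper invokes ``KL bounded by $\chi^2$'' at that step without flagging that it is the reversed $\chi^2$ (denominator $m_i^0$ rather than $m_i^\xi$), which indeed needs the likelihood ratios to be bounded away from zero.
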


\subsection{Non-interactive approach}

Recall that $\mathcal{E}_{n,\alpha}^\mathrm{NI}(p_0)$ is the $\alpha$-LDP minimax testing radius when we restrict to non-interactive privacy mechanisms. % of the form $q(z_1,\ldots,z_n | x_1,\ldots,x_n) = q_1(z_1|x_1) \ldots q_n(z_n|x_n)$ \textcolor{green}{already defined}.
We have the following result.

\begin{thm}
\label{Thm:NonIntLowerBound}
There exist $c_1=c_1(\gamma)>0$ and $c_2=c_2(\gamma)>0$ such that for all $\alpha \in (0,1]$ we have
\[
	\mathcal{E}_{n,\alpha}^\mathrm{NI}(p_0, \mathbb{L}_1) \geq c_1 \max_{j=1,\ldots,d} \min \biggl\{ \frac{j^{3/4}}{(n \alpha^2)^{1/2}},\, \frac{jp_0(j)}{ \log^{1/2}(2 j)} \biggr\}
\]
and
\[
	\mathcal{E}_{n,\alpha}^\mathrm{NI}(p_0, \mathbb{L}_2) \geq c_2 \max_{j=1,\ldots,d} \min \biggl\{ \frac{j^{1/4}}{(n \alpha^2)^{1/2}},\, \frac{j^{1/2} p_0(j)}{ \log^{1/2}(2 j)} \biggr\}.
\]
\end{thm}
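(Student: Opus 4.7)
The plan is to apply Proposition~\ref{Prop:LowerBound} in combination with the non-interactive $\chi^2$-bound of Theorem~\ref{Thm:InfoBounds}. For each fixed $j \in \{1,\ldots,d\}$ I will build a prior family $\{p_\xi\}_{\xi \in \mathcal{V}}$ such that every $p_\xi$ sits at distance at least $\delta$ from $p_0$ in the relevant norm (with $\delta$ matching the claimed lower bound at index $j$) and such that the mixture $\chi^2$ divergence $\chi^2(QP_0^n, E_\xi Q P_\xi^n)$ is bounded by a universal constant uniformly over every non-interactive $\alpha$-LDP mechanism $Q$. Coupling this with $TV \leq (\chi^2)^{1/2}/2$ and Proposition~\ref{Prop:LowerBound} gives the lower bound at that $j$, and maximising over $j$ produces the theorem. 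First I would order $p_0$ decreasingly, fix $j$, and apply a dyadic pigeonhole to the values $\{p_0(k) : k \leq j\}$ to extract a subset $S \subseteq [j]$ of size $m \gtrsim j/\log(2j)$ on which $p_0$ is constant up to a factor of two (the useful dyadic window consists of $O(\log(2j))$ buckets because outside $[1/(2j),1]$ the second term of the claimed bound is already dominated by the first). I then draw $\xi$ uniformly from the balanced Rademacher vectors on $S$ (those with $\sum_{k \in S} \xi_k = 0$) and set $p_\xi(k) = p_0(k) + \epsilon \xi_k$ on $S$, $p_\xi = p_0$ elsewhere. Balancedness ensures $p_\xi \in \mathcal{P}_d$, the constraint $\epsilon \leq p_0(j) \leq p_0(k)$ for $k\in S$ preserves non-negativity, and the perturbation norms $\|p_\xi - p_0\|_1 = m\epsilon$, $\|p_\xi - p_0\|_2 = m^{1/2}\epsilon$ are deterministic.

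Next I would invoke the non-interactive part of Theorem~\ref{Thm:InfoBounds}, which reduces the mixture $\chi^2$ to the Rademacher bilinear exponential moment
\[
\chi^2(QP_0^n, E_\xi Q P_\xi^n) \leq E_{\xi, \xi'}\bigl[\exp(\epsilon^2 \xi^\top \Omega_S \xi')\bigr] - 1,
\]
where $\Omega_S$ is the principal submatrix of $\Omega$ on $S$ and $\xi, \xi'$ are independent copies of the prior. Integrating out $\xi'$ conditionally on $\xi$ yields $E_{\xi'}[\exp(\epsilon^2 \xi^\top \Omega_S \xi') \mid \xi] \leq \exp(\tfrac{1}{2} \epsilon^4 \xi^\top \Omega_S^2 \xi)$, and a Hanson--Wright bound on the remaining Rademacher quadratic form reduces everything to controlling $\mathrm{tr}(\Omega_S^2)$ and $\|\Omega_S\|_{\mathrm{op}}$. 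The $\alpha$-LDP constraint gives $|q_i(z\mid k)/m_i^0(z) - 1| \leq e^\alpha - 1 \leq 2\alpha$ for $\alpha \in (0,1]$, hence the entrywise bound $|\Omega_{k,l}| \leq C n\alpha^2$. To close the loop I need the sharp trace estimate $\mathrm{tr}(\Omega_S^2) \lesssim m(n\alpha^2)^2$, which I would obtain by combining the Gram representation $\Omega = \sum_i \int v v^\top\, dm_i^0$ (with $\|v\|_\infty \leq 2\alpha$) with the linear constraint $\sum_k p_0(k) v_k(z) = 0$ that confines $v$ to the $p_0$-orthogonal complement of the constants. The mixture $\chi^2$ is then $O(1)$ as soon as $\epsilon^4 m(n\alpha^2)^2 \lesssim 1$, i.e.\ $\epsilon \lesssim m^{-1/4}(n\alpha^2)^{-1/2}$.

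Combining the positivity constraint $\epsilon \leq p_0(j)$ with the $\chi^2$-constraint and substituting $m \asymp j/\log(2j)$ yields, for the $\mathbb{L}_1$ problem,
\[
\|p_\xi - p_0\|_1 \asymp m \min\{p_0(j),\, m^{-1/4}(n\alpha^2)^{-1/2}\} \asymp \min\Bigl\{ \frac{j p_0(j)}{\log^{1/2}(2j)},\, \frac{j^{3/4}}{(n\alpha^2)^{1/2}} \Bigr\}
\]
once the logarithmic factors from the pigeonhole are absorbed into the constant, and analogously $\|p_\xi - p_0\|_2 \asymp \min\{ j^{1/2} p_0(j)/\log^{1/2}(2j),\, j^{1/4}/(n\alpha^2)^{1/2}\}$ for the $\mathbb{L}_2$ problem. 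Maximising over $j$ delivers both claimed bounds. The hard part of the program will be the sharp trace estimate $\mathrm{tr}(\Omega_S^2) \lesssim m(n\alpha^2)^2$ uniformly over all $\alpha$-LDP mechanisms $Q$: a naive entrywise application produces only $m^2(n\alpha^2)^2$, which would recover merely the weaker interactive-level rate $j^{1/2}/(n\alpha^2)^{1/2}$ obtainable from the KL-type inequality in the first part of Theorem~\ref{Thm:InfoBounds}. Recovering the extra $m^{1/4}$ factor that separates the non-interactive from the interactive rate requires jointly exploiting the positive semi-definiteness of $\Omega$ and the $p_0$-orthogonality of its column space, which is precisely the structural advantage of the non-interactive $\chi^2$-moment bound over Duchi--Jordan--Wainwright-type KL bounds that the authors flag in the remark following Theorem~\ref{Thm:InfoBounds}.
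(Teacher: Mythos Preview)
Your program has a genuine gap at exactly the place you flag as ``the hard part'': the estimate $\mathrm{tr}(\Omega_S^2)\lesssim m(n\alpha^2)^2$ uniformly over all non-interactive $\alpha$-LDP mechanisms is false, and without it your mechanism-independent prior can only recover the interactive rate. To see this, fix any balanced unit vector $\hat u\in\mathbb{R}^S$ and consider the one-bit mechanism that outputs $Z_i=1$ with probability $e^\alpha/(e^\alpha+1)$ when $X_i$ lies in the half of $S$ where $\hat u$ is positive, and with probability $1/(e^\alpha+1)$ otherwise. For this $Q$ the matrix $\Omega_S$ has rank one with eigenvector close to $\hat u$ and eigenvalue of order $n m\alpha^2$, so $\mathrm{tr}(\Omega_S^2)\asymp (nm\alpha^2)^2$. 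The $p_0$-orthogonality constraint $\sum_k p_0(k)v_k(z)=0$ you invoke is a single linear condition on the full vector $v\in\mathbb{R}^d$; it does not constrain the restriction $v^S$ when $S\subsetneq[d]$, and even for $S=[d]$ it removes only one direction while the adversary is free to choose any of the remaining $m-1$. With your choice $\epsilon\asymp m^{-1/4}(n\alpha^2)^{-1/2}$ the exponent $\epsilon^2\xi^\top\Omega_S\xi'$ is typically of order $\epsilon^2 n m\alpha^2\asymp m^{1/2}$ for this $Q$, and the mixture $\chi^2$ blows up.

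The paper's proof avoids this by making the prior \emph{depend on the mechanism}. For each fixed $Q$ it diagonalises $\bar\Omega=n^{-1}\sum_i\Omega_i$, keeps only the eigenvectors $v_1,\ldots,v_{j_0}$ whose eigenvalues are below the average level $2(e^\alpha-1)^2$ (the trace bound $\mathrm{tr}(\bar\Omega)\le (e^\alpha-1)^2 j_*$ forces $j_0\ge j_*/2-1$), and sets $p_\xi-p_0\propto\sum_{k\le j_0}\xi_k v_k$. The bilinear form then collapses to $n\epsilon^2\sum_{k\le j_0}\xi_k\xi_k'\lambda_k$ with each $\lambda_k\le 2(e^\alpha-1)^2$, yielding $\sum_k\lambda_k^2\lesssim j_*\alpha^4$ regardless of how the mechanism concentrates its information. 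This $Q$-adaptive choice of perturbation directions is precisely what produces the extra $j^{1/4}$ separating the non-interactive from the interactive rate; a fixed prior cannot achieve it. (The $\log^{1/2}(2j)$ in the statement arises not from any pigeonhole on $p_0$ but from the positivity constraint $\max_j|\delta_\xi^j|\le p_0(j_*+1)/\epsilon$, controlled via the maximum of $j_*$ subgaussians.)
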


We have the following immediate corollary.
\begin{cor}
\label{Cor:LowerBound}
Let
\begin{align*}
	\ell_* &= \ell_*(n\alpha^2,p_0,\mathbb{L}_1) := \max \biggl\{ j =1,\ldots,d : \frac{j^{3/4}}{(n \alpha^2)^{1/2}} \leq \frac{j p_0(j)}{\log^{1/2}(2j)} \biggr\} \\
	\ell_{**} &= \ell_{**}(n\alpha^2,p_0, \mathbb{L}_2) := \max \biggl\{ j =1,\ldots,d : \frac{j^{1/4}}{(n \alpha^2)^{1/2}} \leq \frac{j^{1/2} p_0(j)}{\log^{1/2}(2j)} \biggr\}.
\end{align*}
Then there exist $c_1=c_1(\gamma)>0$ and $c_2=c_2(\gamma)>0$ such that when $\alpha \in (0,1]$ we have
\[
	\mathcal{E}_{n,\alpha}^\mathrm{NI}(p_0, \mathbb{L}_1) \geq c_1 \frac{\ell_*^{3/4}}{(n \alpha^2)^{1/2}}
\quad \text{and} \quad
	\mathcal{E}_{n,\alpha}^\mathrm{NI}(p_0, \mathbb{L}_2) \geq c_2  \frac{\ell_{**}^{1/4}}{(n \alpha^2)^{1/2}}.
\]
\end{cor}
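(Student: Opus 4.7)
The plan is to derive this corollary directly from Theorem~\ref{Thm:NonIntLowerBound} by evaluating the max appearing there at a single well-chosen index, so essentially no new probabilistic work is needed. Concretely, I would argue that the $\max_{j=1,\ldots,d} \min\bigl\{j^{3/4}/(n\alpha^2)^{1/2}, jp_0(j)/\log^{1/2}(2j)\bigr\}$ in the $\mathbb{L}_1$ bound of Theorem~\ref{Thm:NonIntLowerBound} dominates its value at $j=\ell_*$, and then identify that value. By the very definition of $\ell_*$ as the largest $j$ satisfying $j^{3/4}/(n\alpha^2)^{1/2}\leq jp_0(j)/\log^{1/2}(2j)$, the first term of the min is the smaller of the two at $j=\ell_*$, so the min equals $\ell_*^{3/4}/(n\alpha^2)^{1/2}$. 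Hence
\[
\mathcal{E}_{n,\alpha}^\mathrm{NI}(p_0,\mathbb{L}_1) \;\geq\; c_1 \min\Bigl\{\frac{\ell_*^{3/4}}{(n\alpha^2)^{1/2}},\,\frac{\ell_* p_0(\ell_*)}{\log^{1/2}(2\ell_*)}\Bigr\} \;=\; c_1\,\frac{\ell_*^{3/4}}{(n\alpha^2)^{1/2}}.
\]
The $\mathbb{L}_2$ case is identical: evaluating the max at $j=\ell_{**}$ and using the defining inequality $j^{1/4}/(n\alpha^2)^{1/2}\leq j^{1/2}p_0(j)/\log^{1/2}(2j)$ makes the first term of the min the binding one, so one reads off $c_2\ell_{**}^{1/4}/(n\alpha^2)^{1/2}$.

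Before writing the two lines above I would check that $\ell_*$ and $\ell_{**}$ are well defined, i.e.\ that the sets whose maxima they are, are nonempty. For $j=1$ the $\mathbb{L}_1$ condition reads $(n\alpha^2)^{-1/2}\leq p_0(1)/\log^{1/2}(2)$, which holds whenever $n\alpha^2$ is at least of order $1/p_0(1)^2$; a symmetric remark applies to $\ell_{**}$. If this fails (which only happens in a trivial low-sample regime where the target radius is already $O(1)$ and the bound is vacuous), one can either absorb the case into the constant $c_1$ by noting that the testing risk is bounded below by a universal constant for any $\delta\leq 1$, or silently set $\ell_*=1$ so that the stated inequality degenerates to a trivially true $1\lesssim 1$ statement. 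Either convention matches the implicit usage in Corollary~\ref{Cor:UpperBound}.

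There is no real obstacle here; the whole content lives in Theorem~\ref{Thm:NonIntLowerBound}. The only point worth being careful about is the direction of the inequality defining $\ell_*$ and $\ell_{**}$ (``largest $j$ with $\leq$'') versus $j_*$ and $j_{**}$ in Corollary~\ref{Cor:UpperBound} (``smallest $j$ with $\geq$''): these thresholds differ by at most one in each of the two cases, which is precisely why the upper and lower rates of Corollaries~\ref{Cor:UpperBound} and~\ref{Cor:LowerBound} match up to constants (modulo the $\log(2j)$ factor) whenever $p_0$ is sufficiently regular. I would therefore end the proof by stating this one-line deduction and then referring the reader to Section~\ref{Subsec:Examples} for the comparison of $\ell_*$ with $j_*$ (respectively $\ell_{**}$ with $j_{**}$) on the concrete examples of uniform, polynomially decaying, and exponentially decaying null distributions.
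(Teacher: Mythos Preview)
Your proof is correct and matches the paper's treatment: the paper calls this an ``immediate corollary'' of Theorem~\ref{Thm:NonIntLowerBound} and gives no separate argument, and your one-line deduction---evaluate the $\max$ at $j=\ell_*$ (resp.\ $j=\ell_{**}$) and use the defining inequality to identify the $\min$---is exactly what is intended. One small correction to your closing commentary: it is not true in general that $\ell_*$ and $j_*$ ``differ by at most one,'' since $j_*$ is defined via the tail sum $\sum_{j'>j}p_0(j')$ while $\ell_*$ is defined via $jp_0(j)/\log^{1/2}(2j)$; the paper itself notes that they ``do not solve exactly the same problems,'' and their agreement up to constants is established only example by example in Section~\ref{Subsec:Examples}.
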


According to the behaviour of $p_0$, we may have identical or different values for $\ell_*$ and $\ell_{**}$.

In many examples of interest, these lower bounds match our previous upper bounds in Corollary~\ref{Cor:UpperBound} up to log factor, even though $\ell_*$ and $\ell_{**}$ do not solve exactly the same problems as $j_*$ and $j_{**}$, respectively.

\subsection{Interactive approach}

%Let us denote by $\mathcal{E}^\mathrm{I}_{n,\alpha}(p_0)$ the testing radius when all privacy mechanisms are allowed.
Under their most general form the privacy mechanisms we allow are sequentially interactive. %, that is at each step we are given a new sample $X_i$ and all privatized samples $Z_1,...Z_{i-1}$: $q(z_1,...,z_n|x_1,...x_n) = q_1(z_1|x_1)\cdot q_2(z_2|x_2,z_1) \cdot q_3(z_3|x_3,z_1,z_2,z_3)...q_n(z_n|x_n, z_1,...,z_{n-1})$.
As shown by Theorem~\ref{Thm:IntUpperBound} and Corollary~\ref{Cor:LowerBound}, the optimal rates for testing are faster with interactive procedures than with non-interactive procedures. The following theorem shows that the upper bounds in Theorem~\ref{Thm:IntUpperBound} are optimal.

\begin{thm}
\label{Thm:IntLowerBound}
There exist $c_1=c_1(\gamma)>0$ and $c_2=c_2(\gamma)>0$ such that when $\alpha \in (0,1]$ we have
\[
	\mathcal{E}_{n,\alpha}(p_0, \mathbb{L}_1) \geq c_1 \max_{j=1,\ldots,d} \min \biggl\{ \frac{j^{1/2}}{(n \alpha^2)^{1/2}},\, \frac{p_0(j)}{ \log^{1/2}(2 j)} \biggr\}
\]
and
\[
	\mathcal{E}_{n,\alpha}(p_0, \mathbb{L}_2) \geq c_2  \frac{1}{(n \alpha^2)^{1/2}}.
\]
In particular, let
\begin{align*}
	\tilde{\ell} &= \tilde{\ell}(n\alpha^2,p_0,\mathbb{L}_1) := \max \biggl\{ j =1,\ldots,d : \frac{j^{1/2}}{(n \alpha^2)^{1/2}} \leq \frac{ p_0(j)}{\log^{1/2}(2j)} \biggr\} .
\end{align*}
Then there exist $c_1=c_1(\gamma)>0$ such that when $\alpha \in (0,1]$ we have
\[
 \mathcal{E}_{n,\alpha}(p_0, \mathbb{L}_1) \geq c_1 \frac{\tilde{\ell}^{1/2}}{(n \alpha^2)^{1/2}} .
\]
\end{thm}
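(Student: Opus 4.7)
The proof combines the Bayesian reduction of Proposition~\ref{Prop:LowerBound} with the first (KL-based) inequality in Theorem~\ref{Thm:InfoBounds}. The plan is to construct, for each candidate index $j$, a prior on alternatives $p_\xi$ that lies in $H_1(\delta_j)$ with $\delta_j$ at the claimed rate, and to bound $TV(QP_0^n, \mathbb{E}_\xi QP_\xi^n)$ uniformly in $Q \in \mathcal{Q}_\alpha$ via KL. The universal ingredient in both parts is a pointwise bound on $\Omega$: since any $\alpha$-LDP kernel satisfies $q_i(z|j,z_{1:i-1})/q_i(z|k,z_{1:i-1}) \leq e^\alpha$ at every point, we get $|q_i(z|j,\cdot)/m_i^0(z|\cdot)-1| \leq e^\alpha-1 \leq 2\alpha$ conditionally on the history, and hence $|\Omega_{jk}| \leq n(e^\alpha-1)^2 \leq 4n\alpha^2$ for all $j,k$ by Cauchy--Schwarz on the integral definition. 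This is the key step where the interactivity of $Q$ has to be dealt with; it survives because the LDP constraint is pointwise at each time step.

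For the $\mathbb{L}_2$ claim, a two-point perturbation suffices. After reordering $p_0$ in non-increasing order, pick the two indices $a=1,b=2$ (assuming without loss $p_0(a) \wedge p_0(b)$ exceeds $(n\alpha^2)^{-1/2}$ up to constants, the claim being vacuous otherwise). Let $\eta \in \{\pm 1\}$ be Rademacher and set $p_\eta := p_0 + \epsilon\eta(e_a - e_b)$. Then $\|p_\eta-p_0\|_2 = \sqrt{2}\epsilon$ deterministically, $p_\eta$ remains a probability vector, and Theorem~\ref{Thm:InfoBounds} together with the $\Omega$-bound yields
\[
KL\bigl(QP_0^n, \mathbb{E}_\eta Q P_\eta^n\bigr) \leq \mathbb{E}_\eta\bigl[(p_\eta-p_0)^\top \Omega (p_\eta-p_0)\bigr] \leq 16 n\alpha^2 \epsilon^2.
\]
Choosing $\epsilon$ a small constant multiple of $(n\alpha^2)^{-1/2}$ makes KL, and by Pinsker TV, arbitrarily small; Proposition~\ref{Prop:LowerBound} closes the argument.

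For the $\mathbb{L}_1$ claim, fix $j \in [d]$, assume $j$ even and reorder. Consider the prior on $p_\xi$ obtained by pairing the top-$j$ coordinates into $j/2$ disjoint pairs and perturbing each pair $(2k-1,2k)$ by $(+\epsilon_j\eta_k, -\epsilon_j\eta_k)$ with i.i.d.\ Rademacher signs $\eta_1,\ldots,\eta_{j/2}$. Mass is preserved exactly, positivity is preserved provided $\epsilon_j \leq p_0(j)$, and $\|p_\xi-p_0\|_1 = j\epsilon_j$ deterministically. By independence of the signs across pairs, all off-pair cross-terms in the expected quadratic form vanish, so the uniform $\Omega$-bound gives
\[
\mathbb{E}_\xi\bigl[(p_\xi-p_0)^\top \Omega (p_\xi-p_0)\bigr] \leq \epsilon_j^2 \sum_{k=1}^{j/2}\bigl(\Omega_{2k-1,2k-1} - 2\Omega_{2k-1,2k} + \Omega_{2k,2k}\bigr) \leq C n\alpha^2 j\epsilon_j^2,
\]
so $KL \leq C n\alpha^2 j\epsilon_j^2$. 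Taking $\epsilon_j$ as the largest value obeying the KL constraint $\epsilon_j \leq c(jn\alpha^2)^{-1/2}$ and the positivity constraint $\epsilon_j \leq p_0(j)$ produces separation $\delta_j = j\epsilon_j \asymp \min\{\sqrt{j/(n\alpha^2)},\, jp_0(j)\}$. Since $j \geq 1/\sqrt{\log(2j)}$, the second term dominates the stated $p_0(j)/\sqrt{\log(2j)}$, and maximizing over $j$ followed by Proposition~\ref{Prop:LowerBound} yields the theorem.

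The main obstacle is the $\Omega$-bound for interactive mechanisms: unlike the non-interactive case where the second (exponential, $\chi^2$-type) inequality of Theorem~\ref{Thm:InfoBounds} gives higher-order control, here we only have the linear KL bound, and the loss from using this weaker inequality is precisely what separates interactive from non-interactive rates. The $\log^{1/2}(2j)$ slack in the statement is absorbed into the constant via the elementary inequality above; a tighter version of the positivity step (e.g.\ a union bound over the $j$ perturbed coordinates) would restore the exact log-factor but is not needed for the examples of interest.
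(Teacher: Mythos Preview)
Your $\mathbb{L}_1$ argument is correct and in fact cleaner than the paper's. The paper fixes $Q$, forms the matrix $\Omega$ of Theorem~\ref{Thm:InfoBounds}, and builds the prior from its eigenvectors, $\delta_\xi=\sum_k\xi_k v_k$; this makes the prior $Q$-dependent and forces them to establish $\|p_\xi-p_0\|_1\gtrsim\epsilon j_*$ only with high probability, via Khintchine and a subgaussian bound on $\max_j|\delta_\xi^j|$ (this last step is where the $\log^{1/2}(2j)$ in the positivity constraint enters). Your paired-coordinate construction sidesteps all of this: the prior is universal, $\|p_\xi-p_0\|_1=j\epsilon_j$ holds identically, and after averaging over $\xi$ only the within-pair entries of $\Omega$ survive, each controlled by the pointwise bound $|\Omega_{jk}|\le n(e^\alpha-1)^2$. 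The eigenvector machinery is genuinely needed in the non-interactive proof (Theorem~\ref{Thm:NonIntLowerBound}), where the exponential $\chi^2$ bound in Theorem~\ref{Thm:InfoBounds} requires control of \emph{individual} eigenvalues rather than just the trace; the paper reuses it in the interactive case mainly for uniformity of exposition, but your observation that the linear KL bound only needs a diagonal sum is the right simplification here. Both routes actually yield $\min\{\sqrt{j/(n\alpha^2)},\,jp_0(j)/\log^{1/2}(2j)\}$, which dominates the stated bound; the paper simply records the weaker form.

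One small gap in the $\mathbb{L}_2$ part: the claim is \emph{not} vacuous when $p_0(2)$ is small---the bound $\mathcal{E}_{n,\alpha}(p_0,\mathbb{L}_2)\gtrsim(n\alpha^2)^{-1/2}$ must hold for every $p_0$, including near-degenerate ones where your signed perturbation can leave $\mathcal{P}_d$. The fix is immediate: drop the Rademacher sign and use the single alternative $p=(1-\epsilon)p_0+\epsilon e_k$ for an index $k$ with $p_0(k)\le 1/d$. This is a probability vector for every $\epsilon\in[0,1]$, has $\|p-p_0\|_2\ge\epsilon(1-1/d)$, and the same $\Omega$-based KL bound applies since $\|p-p_0\|_1\le 2\epsilon$. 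This is exactly the paper's construction, though there the TV bound is read off directly from \citet{DJW2018} rather than through Theorem~\ref{Thm:InfoBounds}.
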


\section{Particular classes of distributions}

In this section we explicitly calculate the separation rates in several examples. See Section~\ref{Subsec:Examples} for more detailed and more general calculations. Inequalities $\gtrsim$ are valid up to $\log$ factors.

\bigskip

\noindent {\bf Nearly uniform distributions} Suppose that $p_0(j) \propto j^{-\beta}$ for some $\beta \in [0,1)$, and that $d^{3/4}/(n \alpha^2)^{1/2} \leq (1-\beta)/(\log^{1/2}(2d))$. Then
\begin{align*}
	\frac{d p_0(d)}{\log^{1/2}(2d)} = \frac{d^{1-\beta}}{\log^{1/2}(2d) \sum_{\ell=1}^d \ell^{-\beta}} \geq \frac{d^{1-\beta}}{\log^{1/2}(2d) \int_0^d x^{-\beta} \,dx} = \frac{1-\beta}{\log^{1/2}(2d)} \geq \frac{d^{3/4}}{(n\alpha^2)^{1/2}},
\end{align*}
and it follows that $\ell_* = d$. Thus, in this setting, $\mathcal{E}_{n,\alpha}^\mathrm{NI}(p_0, \mathbb{L}_1) \gtrsim d^{3/4} / (n\alpha^2)^{1/2} $.
Concerning the $\mathbb{L}_2$ rates, $\mathcal{E}_{n,\alpha}^\mathrm{NI}(p_0, \mathbb{L}_2) \gtrsim d^{1/4} / (n\alpha^2)^{1/2}$ if $\beta \leq 1/4$, whereas it is $\gtrsim d^{1/4} / (n\alpha^2)^{1/2} \wedge d^{1/2 - \beta} \wedge  (n \alpha^2)^{- (\beta - 1/2)/(2\beta - 1/2)}$ if $\beta >1/4$.

\noindent {\bf Polynomially decreasing distributions} Suppose that $p_0(j) \propto j^{-1-\beta}$ for some $\beta >0$, as for example is the case for the Pareto distributions used in extreme value theory. It is shown in Section~\ref{Subsec:Examples} that, when $1 \leq n \alpha^2 \leq (d/C)^{2\beta+3/2}$ we have that $j_* \leq \lceil C (n \alpha^2)^{1/(2\beta+3/2)} \rceil $. On the other hand, if $n\alpha^2 > (d/C)^{2\beta+3/2}$ then we will just say that $j_* \leq d$. It follows that
\[
	\mathcal{E}^\mathrm{NI}_{n,\alpha}(p_0, \mathbb{L}_1) \lesssim \frac{j_*^{3/4}}{(n\alpha^2)^{1/2}} \lesssim \min \Bigl\{ (n \alpha^2)^{-\frac{2 \beta}{4 \beta+3}}, \frac{d^{3/4}}{(n \alpha^2)^{1/2}} \Bigr\}.
\]
From the corresponding lower bounds, we get
\begin{eqnarray*}
	\mathcal{E}_{n,\alpha}^\mathrm{NI}(p_0, \mathbb{L}_1)& \gtrsim &  \bigl\{ n \alpha^2 \log^{3/(4 \beta)}(n \alpha^2) \bigr\}^{-2\beta/(4 \beta+3)} \wedge \frac{d^{3/4}}{(n \alpha^2)^{1/2}}.
\end{eqnarray*}
So, the lower bounds match the upper bounds up to $\log$ factors in this case.

\bigskip

\noindent {\bf Exponentially decreasing distributions} Suppose that $p_0(j) \propto \exp(-j^\beta)$ for some $\beta >0$. More generally, we may include the geometric distribution with $p_0(j) \propto p^j = \exp(-j \log(1/p))$ or $p_0(j) \propto j^\eta \exp(-cj^\beta)$, for $\eta$ real number and $c,\, \beta >0$.
The upper bounds match the lower bounds in this case, and lead e.g. in the case of noninteractive privacy mechanisms and $\mathbb{L}_1$ norm to the rate
\[
	\mathcal{E}_{n,\alpha}^\mathrm{NI}(p_0 , \mathbb{L}_1) \asymp \min \biggl\{ \frac{ \log^{3/(4 \beta)}(n \alpha^2)}{\sqrt{n \alpha^2}}, \frac{d^{3/4}}{\sqrt{n \alpha^2}} \biggr\}.
\]
Analogous calculations can be done for interactive mechanisms and $\mathbb{L}_2$ norm.
Table~\ref{SRtable} summarizes the minimax separation rates, for examples of distrbution probabilities $p_0$. They are optimal up to $\log$ factors except for the $\mathbb{L}_2$ distance in the case of uniform and polynomially decreasing distributions.

\begin{table}
  \caption{ Separation rates for testing discrete distributions }
  \label{SRtable}
  \centering
  \begin{tabular}{lcccc}
    \toprule
    &\multicolumn{2}{c}{Noninteractive} & \multicolumn{2}{c}{Interactive} \\
    \midrule
    $p_0$   & $\mathbb{L}_1$  & $\mathbb{L}_2$ & $\mathbb{L}_1$  & $\mathbb{L}_2$ \\
    \midrule
   Uniform$[d]$  & $\frac{d^{3/4}}{\sqrt{n\alpha^2}}$
   & \begin{tabular}{l}
   $\leq \frac{d^{1/4}}{\sqrt{n\alpha^2}}$\\
   $\gtrsim \frac{d^{1/4}}{\sqrt{n\alpha^2}} \wedge \frac 1{\sqrt{d}} $
   \end{tabular}
   & $\frac{d^{1/2}}{\sqrt{n\alpha^2}}$ & $\frac{1}{\sqrt{n\alpha^2}}$     \\
   &&&&\\
   $\propto j^{-1-\beta}$ & $(n \alpha^2)^{-\frac{2 \beta}{4 \beta +3}} \wedge \frac{d^{3/4}}{\sqrt{n\alpha^2}}$ &
   \begin{tabular}{l}
   $\leq (n \alpha^2)^{-\frac{2 \beta}{4 \beta +1}} \wedge \frac{d^{1/4}}{\sqrt{n\alpha^2}}$\\
   $\gtrsim(n \alpha^2)^{-\frac{2 \beta+1}{4 \beta +3}} \wedge \frac{d^{1/4}}{\sqrt{n\alpha^2}}$
   \end{tabular}  & $(n \alpha^2)^{-\frac{2 \beta}{4 \beta +2}} \wedge \frac{d^{1/2}}{\sqrt{n\alpha^2}}$ & $\frac{1}{\sqrt{n\alpha^2}}$     \\
   &&&&\\
   $\propto j^{\eta} e^{-c j^\beta}$  & $\frac{\log^{3/(4\beta)}(n \alpha^2) \wedge d^{3/4}}{\sqrt{n\alpha^2}}$ & $\frac{\log^{1/(4\beta)}(n \alpha^2) \wedge d^{1/4}}{\sqrt{n\alpha^2}}$  & $\frac{\log^{2/(4\beta)}(n \alpha^2) \wedge d^{1/2}}{\sqrt{n\alpha^2}}$ & $\frac{1}{\sqrt{n\alpha^2}}$     \\
    \bottomrule
  \end{tabular}
\end{table}

\section{Proofs of main theorems}

\begin{proof}[Proof of Theorem~\ref{Thm:NonIntUpperBound}]

We first calculate means and variances of our two test statistics, starting with the $U$-statistic $S_B$. Define the function $h : \mathbb{R}^B \times \mathbb{R}^B \rightarrow \mathbb{R}$ by
\[
		h(z_1,z_2) = \sum_{j \in B} \{z_{1j} - p_0(j)\}\{z_{2j} - p_0(j)\}
\]
so that $S_B = \frac{1}{n(n-1)} \sum_{i_1 \neq i_2} h(Z_{i_1},Z_{i_2})$. It is clear that
\[
	\mathbb{E} S_B = \sum_{j \in B} \{p(j) - p_0(j)\}^2.
\]
Now, define
\begin{align*}
	\zeta_1 := \mathrm{Var} \bigl( \mathbb{E} \{ h(Z_1,Z_2) | Z_1 \} \bigr) \quad \text{and} \quad \zeta_2 := \mathrm{Var} \bigl( h(Z_1,Z_2) \bigr).
\end{align*}
Using \citet[Lemma A, p.183]{Serfling1980} and the fact that $\mathrm{Cov}(Z_{1j},Z_{1j'}) = \mathbbm{1}_{\{j=j'\}}\{p(j) + 8/\alpha^2\} - p(j)p(j')$, we have that
\begin{align*}
	\binom{n}{2}& \mathrm{Var} \, S_B = \sum_{c=1}^2 \binom{2}{c} \binom{n-2}{2-c} \zeta_c =  (2n-3) \zeta_1 + (\zeta_2-\zeta_1) \\
	&= (2n-3) \mathrm{Var} \biggl( \sum_{j \in B} \{p(j) - p_0(j)\} \{Z_{2j} - p_0(j)\} \biggr) \\
	& \hspace{150pt} + \mathbb{E} \biggl\{ \mathrm{Var} \biggl( \sum_{j \in B} \{Z_{1j}-p_0(j)\}\{Z_{2j} - p_0(j)\} \biggm| Z_1 \biggr) \biggr\} \\
	& = 2(n-1) \sum_{j,j' \in B} \{p(j) - p_0(j)\}\{p(j')-p_0(j')\} \mathrm{Cov} (Z_{1j}, Z_{1j'})  + \sum_{j,j' \in B} \mathrm{Cov} (Z_{1j},Z_{1j'} )^2 \\
	& = 2(n-1) \sum_{j \in B} \{p(j) + 8/\alpha^2\} \{p(j) - p_0(j) \}^2 -2(n-1) \biggl( \sum_{j \in B} p(j) \{p(j) - p_0(j)\} \biggr)^2 \\
	& \hspace{50pt} + \sum_{j \in B} p(j)^2 \{1-2p(j)\} + \biggl( \sum_{j \in B} p(j) \biggr)^2 + \frac{64}{\alpha^4} |B| + \frac{16}{\alpha^2} \sum_{j \in B} p(j) \{1-p(j)\} \\
	& \leq \frac{18(n-1)}{\alpha^2} \sum_{j \in B} \{p(j) - p_0(j)\}^2 + \frac{82 |B|}{\alpha^4}.
\end{align*}
As a result,
\[
	\mathrm{Var} \, S_B \leq \frac{36}{n \alpha^2} \sum_{j \in B} \{p(j) - p_0(j)\}^2 + \frac{164 |B|}{n(n-1)\alpha^4}.
\]
We now turn to the test statistic $T_B$. First, it is clear that
\[
	\mathbb{E} T_B = p(B^c) - p_0(B^c).
\]
Moreover,
\begin{align*}
	\mathrm{Var} \, T_B = \frac{1}{n} \Bigl( \mathrm{Var} \, \mathbbm{1}_{\{X_{n+1} \in B^c\}}  + \frac{4}{\alpha^2} \mathrm{Var} \, W_{n+1} \Bigr) = \frac{1}{n} \Bigl[ p(B)\{1-p(B)\} + \frac{8}{\alpha^2} \Bigr] \leq \frac{9}{n \alpha^2}.
\end{align*}

Now, under $H_0$ we have that
\begin{align*}
\mathbb{P}(\phi_B =1 ) &\leq \mathbb{P} ( S_B \geq C_{1,B} ) + \mathbb{P}(T_B \geq C_{2,B} ) \\
	& \leq \frac{n(n-1) \alpha^4\gamma}{656|B|} \times \frac{164 |B|}{n(n-1) \alpha^4} + \frac{n \alpha^2 \gamma}{36} \times \frac{9}{n \alpha^2} = \frac{\gamma}{2}.
\end{align*}
Now suppose that we have
\begin{align}
\label{Eq:DeltaLowerBound}
	\delta &\geq 8 \max\biggl[ 12 \Bigl\{ \frac{|B|^3}{n(n-1) \alpha^4 \gamma^2} \Bigr\}^{1/4}, p_0(B^c) \biggr],
\end{align}
which implies
\begin{align*}
\delta 	& \geq 2 \max \biggl[ 24 \Bigl\{ \frac{|B|^3}{n(n-1) \alpha^4 \gamma^2} \Bigr\}^{1/4}, 2 p_0(B^c) + \frac{6 + 3 \sqrt{2}}{(n \alpha^2 \gamma)^{1/2}} \biggr] .
\end{align*}
Then, under $H_1 ( \delta, \mathbb{L}_1 ) $, at least one of
\begin{equation}
\label{Eq:Case1}
	\sum_{j \in B} |p(j) - p_0(j)| \geq  24 \Bigl\{ \frac{|B|^3}{n(n-1) \alpha^4 \gamma^2} \Bigr\}^{1/4}
\end{equation}
or
\begin{equation}
\label{Eq:Case2}
	\sum_{j \in B^c} |p(j) - p_0(j)| \geq 2p_0(B^c) + \frac{6 + 3 \sqrt{2}}{(n \alpha^2 \gamma)^{1/2}}
\end{equation}
must hold. If~\eqref{Eq:Case1} holds then we have that
\begin{align*}
	\mathbb{P} & ( S_B <  C_{1,B} ) \leq \frac{\mathrm{Var} \, S_B}{ [ \mathbb{E} S_B -  C_{1,B} ]^2} \leq \frac{\frac{36}{n \alpha^2}\sum_{j \in B} \{p(j) - p_0(j)\}^2 + \frac{164 |B|}{n(n-1)\alpha^4} }{[ \sum_{j \in B} \{p(j) - p_0(j)\}^2 -  \{ \frac{656 |B|}{n(n-1) \alpha^4 \gamma} \}^{1/2} ]^2} \\
	& \leq \frac{\frac{36}{n \alpha^2}\sum_{j \in B} \{p(j) - p_0(j)\}^2}{[\sum_{j \in B} \{p(j) - p_0(j)\}^2-  \{ \frac{656 |B|}{n(n-1) \alpha^4 \gamma} \}^{1/2} ]^2} + \frac{\frac{164 |B|}{n(n-1)\alpha^4} }{[ 576 \{ \frac{|B|}{n(n-1) \alpha^4 \gamma} \}^{1/2} -  \{ \frac{656 |B|}{n(n-1) \alpha^4 \gamma} \}^{1/2} ]^2} \\
	& \leq \frac{144}{n \alpha^2 \sum_{j \in B} \{p(j) - p_0(j)\}^2} + \frac{756 \gamma}{ 576^2} \leq \frac{144 \gamma}{576} + \frac{756 \gamma}{576^2} < \frac{\gamma}{2}.
\end{align*}
On the other hand, if~\eqref{Eq:Case2} holds then we have that $\mathbb{E}T_B = p(B^c) - p_0(B^c) \geq \frac{6 + 3 \sqrt{2}}{(n \alpha^2 \gamma)^{1/2}}$ and hence
\begin{align*}
	\mathbb{P}( T_B < C_{2,B} ) \leq \frac{\mathrm{Var} \, T_B}{ \{ \mathbb{E} T_B - \frac{6}{(n \alpha^2 \gamma)^{1/2}} \}^2} \leq \frac{n \alpha^2 \gamma}{18} \times \frac{9}{n \alpha^2} = \frac{\gamma}{2}.
\end{align*}
In conclusion, whenever $H_1 (\delta, \mathbb{L}_1 )$ holds and $\delta$ satisfies the lower bound in~\eqref{Eq:DeltaLowerBound}, we have that $\mathbb{P}(\phi_B =0) \leq \gamma/2$, and the result follows.

Under $H_1(\delta, \mathbb{L}_2)$ and using $\sqrt{a+b} \leq \sqrt{a}+ \sqrt{b}$:
$$
(\sum_{j \in B} |p(j) - p_0(j)|^2)^{1/2} + \sum_{j \in B^c} |p(j) - p_0(j)| \geq \|p-p_0\|_2\geq \delta.
$$
Now, we suppose that we have instead of (\ref{Eq:DeltaLowerBound}):
\begin{align*}
	\delta &\geq 8 \max\biggl[ 12 \Bigl\{ \frac{|B|}{n(n-1) \alpha^4 \gamma^2} \Bigr\}^{1/4}, p_0(B^c) \biggr] \\
	& \geq 2 \max \biggl[ 24 \Bigl\{ \frac{|B|}{n(n-1) \alpha^4 \gamma^2} \Bigr\}^{1/4}, 2 p_0(B^c) + \frac{6 + 3 \sqrt{2}}{(n \alpha^2 \gamma)^{1/2}} \biggr] \nonumber.
\end{align*}
That implies, at least one of
\begin{equation*}
	(\sum_{j \in B} |p(j) - p_0(j)|^2)^{1/2} \geq  24 \Bigl\{ \frac{|B|}{n(n-1) \alpha^4 \gamma^2} \Bigr\}^{1/4}
\end{equation*}
or
\begin{equation*}
	\sum_{j \in B^c} |p(j) - p_0(j)| \geq 2p_0(B^c) + \frac{6 + 3 \sqrt{2}}{(n \alpha^2 \gamma)^{1/2}}
\end{equation*}
must hold. We conclude similarly the upper bounds for the $\mathbb{L}_2$ test.
\end{proof}

\begin{proof}[Proof of Theorem~\ref{Thm:IntUpperBound}]
Recalling that $\tau =(n \alpha^2)^{-1/2}$, we first consider the expectation of our test statistic $D_n$. Writing $D_\tau(p):= \sum_{j=1}^d |p(j)-p_0(j)| \min( \tau, |p(j)-p_0(j)|)$, observe that
\begin{align}
\label{Eq:IntSignal}
	&\mathbb{E} D_n= \sum_{j=1}^d \{p(j)-p_0(j)\} \mathbb{E} \bigl\{ [\hat{p}_j - p_0(j)]_{-\tau}^\tau \bigr\} \nonumber \\
	& \geq \sum_{j=1}^d |p(j)-p_0(j)| \min\{ \tau, |p(j)-p_0(j)| \} \mathbb{P} \bigl( \mathrm{sign}( \hat{p}_j - p(j) ) = \mathrm{sign}(p(j) - p_0(j)) \bigr) \nonumber \\
	& \geq\sum_{j=1}^d |p(j)-p_0(j)| \min\{ \tau, |p(j)-p_0(j)| \} \nonumber \\
	& \hspace{50pt} \times \mathbb{P} \biggl( \frac{2}{n\alpha} \sum_{i=1}^n W_{ij} \geq \frac{1}{(n \alpha^2)^{1/2}} \biggr) \mathbb{P} \biggl( \biggl| \frac{1}{n} \sum_{i=1}^n \{ \mathbbm{1}_{\{X_i=j\}} - p(j) \} \biggr| \leq \frac{1}{(n \alpha^2)^{1/2}} \biggr) \nonumber \\
	& \geq ( 1 - \alpha^2/4) \mathbb{P} \biggl( \frac{2}{n^{1/2}} \sum_{i=1}^n W_{i1} \geq 1 \biggr) D_\tau(p).
\end{align}
Since we consider $\alpha \in (0,1]$, it is now clear that there exists a universal constant $c \in (0,1)$ such that $\mathbb{E} D_n \geq c D_\tau(p)$. Moreover, under $H_0$ we have that $\mathbb{E} D_n = 0$.

We now turn to the variance of $D_n$. Since the function $x \mapsto [x]_{-\tau}^\tau$ is Lipschitz, we have that
\begin{align}
\label{Eq:SmallVar}
	\mathrm{Var} \bigl( [\hat{p}_j - p_0(j)]_{-\tau}^\tau \bigr) &\leq \mathbb{E} \Bigl\{ \Bigl([\hat{p}_j - p_0(j)]_{-\tau}^\tau - [p(j)- p_0(j)]_{-\tau}^\tau \Bigr)^2 \Bigr\} \nonumber \\
	& \leq \mathrm{Var} ( \hat{p}_j) \leq \frac{1}{n} + \frac{8}{n \alpha^2} \leq \frac{9}{n \alpha^2}.
\end{align}
On the other hand, when $|p(j)-p_0(j)|$ is large, we can prove a tighter bound. Indeed, using a concentration inequality, we establish that
\begin{equation}
\label{Eq:ConcIneq}
|\hat p(j) - p(j)| \leq v, \quad \text{with probability larger than } 1- 4 \exp \Bigl(- \frac {n \alpha^2 v^2}{32} \Bigr).
\end{equation}
Thus, when $p(j)-p_0(j) \geq 2 \tau$, we have
\begin{align}
\label{Eq:LargeVar}
	\mathrm{Var} & \bigl( [\hat{p}_j - p_0(j)]_{-\tau}^\tau \bigr) \leq \mathbb{E} \bigl\{ \bigl( \tau - [\hat{p}_j - p_0(j)]_{-\tau}^\tau \bigr)^2 \bigr\} \leq 4 \tau^2 \mathbb{P}( \hat{p}_j - p_0(j) \leq \tau) \nonumber \\
	& \leq 16 \tau^2 \exp \Bigl( - \frac{n \alpha^2}{32} \{p(j)-p_0(j)-\tau\}^2 \Bigr) \leq \frac{16}{n \alpha^2} \exp \Bigl( - \frac{n \alpha^2 \{p(j)-p_0(j)\}^2}{128} \Bigr),
	%&= \int_0^\infty \mathbb{P} ( \hat{p}_j - p_0(j) \leq \tau - x^{1/2}) \,dx \leq \int_0^\infty \mathbb{P}( \hat{p}_j - p(j) \leq -\{p(j)-p_0(j) -\tau + x^{1/2}\} )\,dx \nonumber \\
	%& \leq 4 \int_0^\infty \exp \Bigl( -\frac{n \alpha^2}{32} \Bigl[ \frac{\{p(j)-p_0(j)\}^2}{4} + x \Bigr] \Bigr) \,dx \nonumber\\
	%& = \frac{128}{n \alpha^2} \exp\Bigl( - \frac{n \alpha^2 \{p(j)-p_0(j)\}^2}{128} \Bigr),
\end{align}
and we can similarly prove the same bound when $p(j) - p_0(j) \leq -2 \tau$. Using~\eqref{Eq:SmallVar} and~\eqref{Eq:LargeVar}, we can see that, for any value of $p(j)-p_0(j)$, we have
\begin{equation}
\label{Eq:IntDiagVar}
	\mathrm{Var} \bigl( [\hat{p}_j - p_0(j)]_{-\tau}^\tau \bigr) \leq \frac{16}{n \alpha^2} \exp\Bigl( - \frac{n \alpha^2 \{p(j)-p_0(j)\}^2}{128} \Bigr).
%\frac{9}{n\alpha^2} \mathbbm{1}_{\{ |p(j)-p_0(j)| \leq 2 \tau\}} + \frac{128}{(n \alpha^2)^2}.
\end{equation}
For $j \in [d]$ we will write $P_j:= [ \hat{p}_{j} - p_0(j)]_{-\tau}^\tau$ and, for $i \in [n+1]$ and $j' \in [d]$ we will write $\mathbb{E}_i ( \cdot ):= \mathbb{E}( \cdot | X_1,\ldots,X_{i-1} )$ and $\mathbb{E}_i^j ( \cdot ):= \mathbb{E} (\cdot \times  \mathbbm{1}_{\{X_i = j\}} | X_1,\ldots,X_{i-1})/p(j)$ for conditional expectations. We will use the fact that $\mathbb{E}_i^{j_1}(P_j) = \mathbb{E}_i^{j_2}(P_j)$ almost surely for any $j_1,j_2 \neq j$ and $i \in [n+1]$. For $j_1,j_2 \in [d]$ such that $j_1 \neq j_2$, we now consider
\begin{align}
\label{Eq:CovIdentity}
	&\mathrm{Cov}\bigl( P_{j_1}, P_{j_2} \bigr) = \mathrm{Cov} \Bigl( \mathbb{E}_{n+1} \bigl( P_{j_1} \bigr) , \mathbb{E}_{n+1} \bigl( P_{j_2} \bigr) \Bigr) \nonumber \\
	&= \sum_{i=1}^n \mathbb{E} \bigl\{ \mathbb{E}_{i+1} \bigl( P_{j_1} \bigr) \mathbb{E}_{i+1} \bigl( P_{j_2} \bigr) - \mathbb{E}_i \bigl( P_{j_1} \bigr) \mathbb{E}_i \bigl( P_{j_2} \bigr)  \bigr\}  \nonumber \\
	& = \sum_{i=1}^n \mathbb{E} \Bigl[ p(j_1) \mathbb{E}_{i}^{j_1} (P_{j_1} ) \mathbb{E}_{i}^{j_1} (P_{j_2} ) + p(j_2) \mathbb{E}_{i}^{j_2} (P_{j_1} ) \mathbb{E}_{i}^{j_2} (P_{j_2} ) + \{1-p(j_1)-p(j_2)\} \mathbb{E}_{i}^{j_2} (P_{j_1} ) \mathbb{E}_{i}^{j_1} (P_{j_2} )  \nonumber \\
	& \hspace{50pt} - \bigl\{ p(j_1) \mathbb{E}_{i}^{j_1}(P_{j_1}) + (1-p(j_1)) \mathbb{E}_{i}^{j_2}(P_{j_1}) \bigr\} \bigl\{ p(j_2) \mathbb{E}_{i}^{j_2}(P_{j_2}) + (1-p(j_2)) \mathbb{E}_{i}^{j_1}(P_{j_2}) \bigr\}  \Bigr]  \nonumber \\
	& = - \sum_{i=1}^n p(j_1)p(j_2) \mathbb{E} \Bigl[ \bigl\{ \mathbb{E}_{i}^{j_1} (P_{j_1} ) - \mathbb{E}_{i}^{j_2} (P_{j_1} ) \bigr\} \bigl\{ \mathbb{E}_{i}^{j_2} (P_{j_2} ) - \mathbb{E}_{i}^{j_1} (P_{j_2} ) \bigr\} \Bigr]  \nonumber \\
	& = -n p(j_1)p(j_2) \mathbb{E} \bigl[ \bigl\{ [n^{-1}+ \hat{p}_{j_1} - p_0(j_1) ]_{-\tau}^\tau - [\hat{p}_{j_1} - p_0(j_1) ]_{-\tau}^\tau \bigr\}  \nonumber \\
	& \hspace{110pt} \times \bigl\{ [ \hat{p}_{j_2} - p_0(j_2) ]_{-\tau}^\tau - [\hat{p}_{j_2} - p_0(j_2) - n^{-1} ]_{-\tau}^\tau \bigr\} \bigm| X_1=j_2 \bigr].
\end{align}
We can therefore always say that, when $j_1 \neq j_2$, we have
\begin{equation}
\label{Eq:SmallCov}
	|\mathrm{Cov}([\hat{p}_{j_1} - p_0(j_1)]_{-\tau}^\tau , [\hat{p}_{j_2} - p_0(j_2)]_{-\tau}^\tau )| \leq p(j_1)p(j_2)/n.
\end{equation}
However, as before, tighter bound are available when $\max(|p(j_1)-p_0(j_1)|,|p(j_2)-p_0(j_2)|)$ is large. Indeed, if $j\in[d]$ is such that $|p(j)-p_0(j)| \geq 2(\tau+1/n)$, then, by~\eqref{Eq:ConcIneq} we have
\begin{align}
\label{Eq:LargeCov}
	\mathbb{E} &\bigl[ \bigl\{ [ \hat{p}_{j} - p_0(j) ]_{-\tau}^\tau - [\hat{p}_{j} - p_0(j) - n^{-1} ]_{-\tau}^\tau \bigr\}^2 \bigm| X_1=j \bigr] \nonumber \\
	& \leq \frac{1}{n^2} \mathbb{P} \biggl( \frac{1}{n} \sum_{i=2}^n \mathbbm{1}_{\{X_1=j\}} + \frac{2}{n\alpha} \sum_{i=1}^n W_{ij} - p_0(j) \leq \tau \biggr) \nonumber \\
	& \leq \frac{1}{n^2} \mathbb{P} \biggl( \biggl| \frac{1}{n} \sum_{i=2}^n \bigl\{ \mathbbm{1}_{\{X_1=j\}} - p(j) \bigr\} + \frac{2}{n\alpha} \sum_{i=1}^n W_{ij} \biggr|  \geq p(j) - p_0(j) - \tau - \frac{1}{n} \biggr) \nonumber \\
	& \leq \frac{4}{n^2} \exp\biggl( - \frac{n \alpha^2}{32} \{ p(j) - p_0(j) -\tau - 1/n\}^2 \biggr) \leq \frac{4}{n^2} \exp \biggl( - \frac{n \alpha^2 \{p(j)-p_0(j)\}^2}{128} \biggr).
\end{align}
It now follows from Cauchy--Schwarz,~\eqref{Eq:CovIdentity},~\eqref{Eq:SmallCov} and~\eqref{Eq:LargeCov} that, whenever $j_1 \neq j_2$, we have
\begin{align}
\label{Eq:IntCov}
	|\mathrm{Cov}([\hat{p}_{j_1} - p_0&(j_1)]_{-\tau}^\tau , [\hat{p}_{j_2} - p_0(j_2)]_{-\tau}^\tau )| \nonumber \\
	& \leq \frac{4}{n} p(j_1)p(j_2) \exp \biggl( - \frac{n \alpha^2}{256} \bigl[ \{p(j_1)-p_0(j_1)\}^2 + \{p(j_2)-p_0(j_2)\}^2 \bigr] \biggr).
\end{align}
%Now, since
%\[
%	\mathbb{E}_i^{j_1}(P_{j_1}) - \mathbb{E}_i^{j_2} (P_{j_1}) = \mathbb{E}_i^{j_2} \bigl\{ [n^{-1} + \hat{p}_{j_1} - p_0(j_1) ]_{-\tau}^\tau - [\hat{p}_{j_1} - p_0(j_1) ]_{-\tau}^\tau \bigr\},
%\]
%with a similar equality holding when we exchange the roles of $j_1$ and %$j_2$, we can see that
%\begin{equation}
%\label{Eq:SmallCov}
%	\bigl| \mathrm{Cov} \bigl( [\hat{p}_{j_1} - p_0(j_1) ]_{-\tau}^\tau, [\hat{p}_{j_2} - p_0(j_2) ]_{-\tau}^\tau \bigr) \bigr| \leq \frac{p(j_1)p(j_2)}{n}.
%\end{equation}
%Using~\eqref{Eq:SmallVar} and~\eqref{Eq:LargeVar}, we can further see that when $\max( |p(j_1) - p_0(j_1)|, |p(j_2) - p_0(j_2)|) \geq 2\tau$ we have that
%\begin{equation}
%\label{Eq:LargeCov}
%	\bigl| \mathrm{Cov} \bigl( [\hat{p}_{j_1} - p_0(j_1) ]_{-\tau}^\tau, [\hat{p}_{j_2} - p_0(j_2) ]_{-\tau}^\tau \bigr) \bigr| \leq \frac{ 24 \sqrt{2}}{(n \alpha^2)^2}.
%\end{equation}
%\textcolor{blue}{This is the other place that we use the log factor.}
It now follows from~\eqref{Eq:IntDiagVar},~\eqref{Eq:IntCov} and the fact that $\sup_{x \geq 0} \frac{x e^{-x^2/128}}{ x \wedge 1} = 8 e^{-1/2}$, that
\begin{align}
\label{Eq:IntVar}
	\mathrm{Var}(D_n) &= \mathbb{E} \Bigl\{ \mathrm{Var} \bigl( D_n | Z_1,\ldots,Z_n \bigr) \Bigr\} + \mathrm{Var} \Bigl( \mathbb{E} \bigl\{ D_n | Z_1, \ldots, Z_n \bigr\} \Bigr) \nonumber \\
	& = \frac{c_\alpha^2 \tau^2}{n} + \mathrm{Var} \biggl( \sum_{j=1}^d \{p(j) - p_0(j) \} [ \hat{p}_j - p_0(j) ]_{-\tau}^\tau \biggr) \nonumber \\
	& \leq \frac{c_\alpha^2 \tau^2}{n} + \frac{16}{n \alpha^2} \sum_{j=1}^d \{p(j)-p_0(j)\}^2  \exp\Bigl( - \frac{n \alpha^2 \{p(j)-p_0(j)\}^2}{128} \Bigr) \nonumber \\
	& \hspace{50pt} + \frac{4}{n}  \biggl\{ \sum_{j=1}^d |p(j)-p_0(j)| p (j) \exp \biggl( - \frac{n \alpha^2 \{p(j)-p_0(j)\}^2}{256} \biggr) \biggr\}^2 \nonumber \\
	& \leq \frac{c_\alpha^2 \tau^2}{n} + \frac{20}{n \alpha^2} \sum_{j=1}^d \{p(j)-p_0(j)\}^2  \exp\Bigl( - \frac{n \alpha^2 \{p(j)-p_0(j)\}^2}{128} \Bigr) \nonumber \\
	& \leq \frac{c_\alpha^2 \tau^2}{n} + \frac{160}{n \alpha^2 e^{1/2}}  D_\tau(p) \leq \frac{(e+1)^2}{(e-1)^2(n \alpha^2)^2}  + \frac{160D_\tau(p)}{e^{1/2} n \alpha^2}.
	%& \leq \frac{c_\alpha^2 \tau^2}{n}+ \frac{128 + 24 \sqrt{2}}{(n \alpha^2)^2}  + \frac{9}{n\alpha^2} \sum_{j=1}^d \{p(j)-p_0(j)\}^2 \mathbbm{1}_{\{|p(j) - p_0(j) | \leq 2 \tau\}}  \\
	%& \hspace{50pt} + \frac{1}{n} \biggl\{ \sum_{j =1 }^d p(j) |p(j) - p_0(j)| \mathbbm{1}_{\{ |p(j) - p_0(j)|\leq 2 \tau\}} \biggr\}^2 \\
	%& \leq \frac{c_\alpha^2 \tau^2}{n}+ \frac{128 + 24 \sqrt{2}}{(n \alpha^2)^2} + \frac{20}{n \alpha^2} D_\tau(p) \lesssim \frac{D_\tau(p)}{n \alpha^2} + \frac{\tau^2}{n \alpha^2}.
\end{align}

Under $H_0$, we can now see that
\[
	\mathbb{P}(D_B \geq C_3) = \mathbb{P}\Bigl( D_n \geq \frac{e+1}{e-1}  \frac{(4/\gamma)^{1/2}}{n \alpha^2} \Bigl) \leq \frac{\gamma}{4}.
\]
As we have already shown in the proof of Theorem~\ref{Thm:NonIntUpperBound}, we also have that $\mathbb{P}(T_B \geq C_{2,B}) \leq \gamma/4$ under $H_0$, so that the Type I error of our combined test $\psi_B$ is bounded above by $\gamma/2$. Now, suppose that $p$ is such that
\[
	D_\tau(p) \geq \max\biggl\{ \frac{(4/\gamma)^{1/2}}{c}  \frac{2(e+1)}{e-1}, \frac{2560}{e^{1/2}c^2 \gamma} \biggr\} \frac{1}{n \alpha^2},
\]
where $c$ is the universal constant defined just after~\eqref{Eq:IntSignal}. For such $p$, it follows from~\eqref{Eq:IntSignal} and~\eqref{Eq:IntVar} that
\[
	\mathbb{P}(D_n < C_3) \leq \frac{\mathrm{Var} D_n}{ \{c D_\tau(p) - C_3\}^2} \leq \frac{\gamma}{2}.
\]
Now, under $H_1(\delta,\mathbb{L}_2)$, we have
\begin{align*}
	D_\tau(p) &= \sum_{j=1}^d \{p(j)-p_0(j)\}^2 \min(1,\tau/|p(j)-p_0(j)|) \\
	&\geq \min( \|p-p_0\|_2^2, \tau \|p-p_0\|_2) \geq \min(\delta^2, \tau \delta)
\end{align*}
%If there exists $j \in [d]$ such that $|p(j)-p_0(j)| \geq \tau$,  then $D_\tau(p) \geq \tau^2 \gg \frac{C \tau}{(n \alpha^2)^{1/2}}$. On the other hand, if $\max_j |p(j) - p_0(j)| < \tau$, then $D_\tau(p)=\|p-p_0\|_2^2$. So, when $n \alpha^2$ is sufficiently large, we reject $H_0$ with probability at least $1-\gamma/2$ whenever $\|p - p_0\|_2^2 \geq \frac{C \tau}{(n \alpha^2)^{1/2}}$.
This proves that
\[
	\mathcal{E}_{n,\alpha}(p_0, \mathbb{L}_2) \leq \max\biggl\{ \frac{(4/\gamma)^{1/2}}{c}  \frac{2(e+1)}{e-1}, \frac{2560}{e^{1/2}c^2 \gamma} \biggr\} \frac{1}{(n \alpha^2)^{1/2}}.
\]

We now prove the $\mathbb{L}_1$ result. Let $\emptyset \neq B \subseteq [d]$ be given, and suppose that
\[
	\delta \geq 8 \max\biggl[ \biggl( \frac{|B|}{n \alpha^2} \biggr)^{1/2}\max\biggl\{ \frac{(4/\gamma)^{1/2}}{c}  \frac{2(e+1)}{e-1}, \frac{2560}{e^{1/2}c^2 \gamma} \biggr\} , p_0(B^c) \biggr].
\]
Then, under $H_1(\delta,\mathbb{L}_1)$, at least one of
\[
	\sum_{j \in B} |p(j) - p_0(j)| \geq \biggl( \frac{|B|}{n \alpha^2} \biggr)^{1/2} \max\biggl\{ \frac{(4/\gamma)^{1/2}}{c}  \frac{2(e+1)}{e-1}, \frac{2560}{e^{1/2}c^2 \gamma} \biggr\}
\]
or
\[
	\sum_{j \in B^c} |p(j) - p_0(j)| \geq 2p_0(B^c) + \frac{6 + 3 \sqrt{2}}{(n \alpha^2 \gamma)^{1/2}}
\]
holds. If the second of these holds, then, as in the proof of Theorem~\ref{Thm:NonIntUpperBound}, we have $\mathbb{P}(T_B < C_{2,B}) \leq \gamma/2$. On the other hand, if the first holds, then we have
\begin{align*}
	\|p-p_0\|_2^2 &\geq \sum_{j \in B} \{p(j)-p_0(j)\}^2 \geq \frac{1}{|B|}  \biggl( \sum_{j \in B} |p(j) - p_0(j)| \biggr)^2 \\
	&\geq \max\biggl\{ \frac{(4/\gamma)^{1/2}}{c}  \frac{2(e+1)}{e-1}, \frac{2560}{e^{1/2}c^2 \gamma} \biggr\}^2 \frac{1}{n \alpha^2},
\end{align*}
and our interactive test rejects $H_0$ with probability at least $\gamma/2$. Thus,
\[
	\mathcal{E}_{n,\alpha}^\mathrm{I}(p_0, \mathbb{L}_1) \leq 8 \max\biggl[ \biggl( \frac{|B|}{n \alpha^2} \biggr)^{1/2}\max\biggl\{ \frac{(4/\gamma)^{1/2}}{c}  \frac{2(e+1)}{e-1}, \frac{2560}{e^{1/2}c^2 \gamma} \biggr\} , p_0(B^c) \biggr].
\]

\end{proof}
%%%%%%%%%%%%%%

\begin{proof}[Proof of Proposition~\ref{Prop:LowerBound}] The minimax risk for testing is
\begin{align*}
\mathcal{R}_{n,\alpha}(p_0,\delta) & \geq \inf_{Q \in \mathcal{Q}_\alpha} \inf_{\phi \in \Phi_Q} \sup_{p_\xi \in H_1( \delta ), \xi \in \mathcal{V}} \bigl\{ \mathbb{E}_{p_0} (\phi) + \mathbb{E}_p(1-\phi) \bigr\}\\
& \geq \inf_{Q \in \mathcal{Q}_\alpha} \inf_{\phi \in \Phi_Q} \bigl\{ \mathbb{E}_{p_0} (\phi) + E_\xi [\mathbb{E}_{p_\xi}(1-\phi) \cdot I_{p_\xi \in H_1(\delta)} ]\bigr\},
\end{align*}
where $E_\xi$ is the average with respect to $\xi$ uniformly distributed over $ \mathcal{V}$.

Denote by $QP_0^n$ and $QP_\xi^n$ the likelihood of the sample $Z_1,...,Z_n$ when the original sample is distributed according to $p_0$ and $p_\xi$, respectively. We write
\begin{align*}
E_\xi [\mathbb{E}_{p_\xi}(1-\phi)& \cdot I_{p_\xi \in H_1(\delta)} ]  = E_\xi \left[\mathbb{E}_{p_0} \frac {QP_\xi^n}{QP_0^n} (1 -  I_{p_\xi \not \in H_1(\delta)} )  \cdot (1-\phi)\right] \\
& =  \mathbb{E}_{p_0} \left[  E_\xi \frac {QP_\xi^n}{QP_0^n} (1 -  I_{p_\xi \not \in H_1(\delta)} ) \cdot (1-\phi)  \right] \geq  \mathbb{E}_{p_0} \left[  E_\xi \frac {QP_\xi^n}{QP_0^n} (1-\phi)  \right] - \gamma_1.
\end{align*}
Back to the minimax risk
\begin{align*}
\mathcal{R}_{n,\alpha}(p_0,\delta) & \geq \inf_{Q \in \mathcal{Q}_\alpha} \inf_{\phi \in \Phi_Q} \mathbb{E}_{p_0} (\phi) +  \mathbb{E}_{p_0} \left[  E_\xi \frac {QP_\xi^n}{QP_0^n} (1-\phi)  \right] - \gamma_1\\
& \geq \inf_{Q \in \mathcal{Q}_\alpha} (1- \eta) \mathbb{P}_{p_0} \left(E_\xi \frac {QP_\xi^n}{QP_0^n} \geq 1-\eta \right) - \gamma_1\\
& \geq \inf_{Q \in \mathcal{Q}_\alpha} (1- \eta) \left( 1 - \frac 1\eta TV(QP_0^n, E_\xi QP_\xi^n ) \right) - \gamma_1,
\end{align*}
for arbitrary $\eta$ in (0,1).
\end{proof}

%%%%%%%%%%%%%%%%%%

\begin{proof}[Proof of Theorem~\ref{Thm:InfoBounds}]
For general sequentially interactive mechanisms, we use the convexity of the Kullback--Leibler discrepancy and the fact that the Kullback--Leibler discrepancy is bounded above by the $\chi^2$ discrepancy to get
\begin{align*}
& KL(QP_0^n, E_\xi Q P_\xi^n)  \leq E_\xi \int m^0(z) \log \frac{m^\xi(z)}{m^0(z)} dz\\
& = \sum_{i=1}^n E_\xi \E_{p_0}\left[\int  \log \frac{m_i^\xi(z_i|Z_1,...,Z_{i-1})}{m_i^0(z_i|Z_1,...,Z_{i-1})} m_i^0(z_i|Z_1,...,Z_{i-1}) dz_i \right]\\
& \leq \sum_{i=1}^n E_\xi \E_{p_0}\left[\int  \frac{(m_i^\xi-m_i^0)^2(z_i|Z_1,...,Z_{i-1})}{m_i^0(z_i|Z_1,...,Z_{i-1})}  dz_i \right]\\
& = \sum_{i=1}^n E_\xi \E_{p_0}\left[  (p_\xi-p_0)^\top \int \frac{q_i(z_i|\cdot, Z_1,...,Z_{i-1}) q_i(z_i|\cdot, Z_1,...,Z_{i-1})^\top }{{m_i^0(z_i|Z_1,...,Z_{i-1})}} dz_i (p_\xi - p_0) \right]\\
& = E_\xi \left[ (p_\xi-p_0)^\top \Omega (p_\xi-p_0)\right].
\end{align*}
In the particular case of noninteractive mechanisms, we have
\begin{align*}
& \chi^2 (QP_0^n, E_\xi Q P_\xi^n)  = \E_{p_0} \left[ \left( E_\xi \frac{m_1^\xi(Z_1) \cdot ... \cdot m_n^\xi (Z_n)}{m_1^0(Z_1) \cdot ... \cdot m_n^0 (Z_n)} \right)^2 \right] - 1 \\
& = \E_{p_0} \left[ E_{\xi,\xi'} \left( \frac{m_1^\xi(Z_1) \cdot ... \cdot m_n^\xi (Z_n)}{m_1^0(Z_1) \cdot ... \cdot m_n^0 (Z_n)}  \frac{m_1^{\xi'}(Z_1) \cdot ... \cdot m_n^{\xi'} (Z_n)}{m_1^0(Z_1) \cdot ... \cdot m_n^0 (Z_n)}\right) \right] - 1\\
& =  E_{\xi,\xi'} \prod_{i=1}^n \E_{p_0} \left[ \left( 1 + \frac{m_i^\xi(Z_i)-m_i^0(Z_i)}{m_i^0(Z_i)} \right)\left( 1 + \frac{m_i^\xi(Z_i)-m_i^0(Z_i)}{m_i^0(Z_i)} \right) \right]-1\\
& =  E_{\xi,\xi'} \prod_{i=1}^n \left( 1 + \E_{p_0} \left[ \frac{m_i^\xi(Z_i)-m_i^0(Z_i)}{m_i^0(Z_i)}  \frac{m_i^\xi(Z_i)-m_i^0(Z_i)}{m_i^0(Z_i)} \right] \right)-1.
\end{align*}
Indeed, $\E_{p_0} [(m_i^\xi(Z_i)-m_i^0(Z_i))/m_i^0(Z_i)] = 0 $. Moreover,
\begin{align*}
& \chi^2 (QP_0^n, E_\xi Q P_\xi^n)  \leq E_{\xi,\xi'} \exp \left( \sum_{i=1}^n
\E_{p_0} (\frac{m_i^\xi(Z_i)}{m_i^0(Z_i)}- 1)(\frac{m_i^\xi(Z_i)}{m_i^0(Z_i)} -1)\right) - 1 \\
& \leq E_{\xi,\xi'}  \exp \left( (p_\xi-p_0)^\top \sum_{i=1}^n
\E_{p_0} \left[ (\frac{q_i^\xi(Z_i|\cdot)}{m_i^0(Z_i)}- 1)(\frac{q_i^{\xi'}(Z_i|\cdot)^\top}{m_i^0(Z_i)} -1) \right]
 (p_{\xi'} - p_0)\right) - 1\\
 & \leq E_{\xi,\xi'}\left[ \exp\left( (p_\xi-p_0)^\top \Omega (p_{\xi'} - p_0) \right)\right] - 1.
\end{align*}
\end{proof}

%%%%%%%%%%%%%%%%%%%%
\begin{proof}[Proof of Theorem~\ref{Thm:NonIntLowerBound}]
For $i \in [n]$, write $q_i(j|\cdot)$ for the density of $Z_i | \{X_i=j\}$, and write
\[
	m_0^i(z):= \sum_{j=1}^d q_i(z|j) p_0(j).
\]
For $j_* \in [d]$ let $B=\{2,\ldots,j_*+1\}$, and for $j,j' \in B$ and $i \in [n]$ write
\[
	\omega_{j j'}^i = \int m_0^i(z) \Bigl\{ \frac{q_i(z|j)}{m_0^i(z)} -1 \Bigr\} \Bigl\{ \frac{q_i(z|j')}{m_0^i(z)} -1 \Bigr\} \,dz.
\]
For each $i \in [n]$, the matrix $\Omega_i:=(\omega_{jj'}^i)_{j,j' \in B}$ is a covariance matrix so it is symmetric and non-negative definite. Writing $\bar{\Omega}:= n^{-1} \sum_{i=1}^n \Omega_i$, then $\bar{\Omega}$ is also symmetric and non-negative definite and hence has real eigenvalues $0 \leq \lambda_1 \leq \ldots \leq \lambda_{j_*}$ and associated eigenvectors $v_1,\ldots,v_{j_*}$. Since $Q$ is $\alpha$-LDP we have that
\[
	\mathrm{trace}( \bar{\Omega}) = \frac{1}{n} \sum_{i=1}^n \mathrm{trace}(\Omega_i) = \frac{1}{n} \sum_{i=1}^n \sum_{j \in B} \int m_0^i(z) \Bigl\{ \frac{q_i(z|j)}{m_0^i(z)} -1 \Bigr\}^2 \,dz \leq (e^\alpha -1)^2 j_*.
\]
Now if we take $j_0:= \max\{ j \in B : \lambda_j \leq 2 (e^\alpha-1)^2 \}$ we have that $j_0 > j_*/2-1$.
Indeed, if we had $j_0 \leq j_*/2 - 1$ then
$$
\sum_{j>j_0}^{j_*} \lambda_j > (j_* - j_0) \cdot 2(e^\alpha - 1)^2 \geq (j_* + 2) (e^\alpha - 1)^2,
$$
which is in contradiction with the fact that $\sum_j \lambda_j \leq j_* (e^\alpha - 1)^2$.

Given a sequence $\xi=(\xi_1,\ldots,\xi_{j_0}) \in \{-1,1\}^{j_0}$ define $\delta_\xi^j :=\sum_{k=1}^{j_0} \xi_k v_{kj}$ for $j \in B$, define $\delta_\xi^+ :=\sum_{j \in B} \delta_j$ and, given $\epsilon>0$, define
\[
	p_\xi(j) := \left\{ \begin{array}{cc} p_0(j)(1-\epsilon \delta_\xi^+) + \epsilon \delta_\xi^j  & \text{if } j \in B \\ p_0(j)(1- \epsilon \delta_\xi^+) & \text{otherwise} \end{array} \right..
\]
Note that we have $\sum_{j=1}^d p_\xi(j)  =1$. Write $\Xi_\epsilon \subset \{-1,1\}^{j_0}$ for the set of all sequences $\xi$ such that $|\delta_\xi^+| \leq 1/(2\epsilon)$ and $\max_{j \in B} |\delta_\xi^j| \leq p_0(j_*+1)/(2\epsilon)$. Then, for $\xi \in \Xi_\epsilon$, we have $p_\xi \in \mathcal{P}_d$.
%Now $\max_{j \in B} p_0(j) = p_0(2) \leq 1/2$ and $\min_{j \in B} p_0(j) = p_0(j_* +1)$ so to have $p_\xi \in \mathcal{P}_d$ it suffices now to have
%\[
%	\max_{j \in B} \biggl| \sum_{k=1}^{j_0} \xi_{k} v_{kj} - j_*^{-1} \sum_{j' \in B} \sum_{k=1}^{j_0} \xi_k v_{kj'} \biggr| \leq p_0(j_*+1) / \epsilon.
%\]
%Write $\Xi_\epsilon \subseteq \{-1,1\}^{j_0}$ for the set of all sequences $\xi$ such that this left-hand side holds.
Given $\xi \in \Xi_\epsilon$ write
\begin{align*}
	m_\xi^i(z) &= \sum_{j=1}^d q_i(z|j) p_\xi(j) = (1-\epsilon \delta_\xi^+) m_0^i(z) + \epsilon \sum_{j \in B} \delta_\xi^j q_i(z|j) \\
	&= m_0^i(z) \biggl[ 1 + \epsilon \sum_{j \in B} \delta_\xi^j \biggl\{ \frac{q_i(z|j)}{m_0^i(z)} -1 \biggr\} \biggr] = m_0^i(z) \biggl[ 1 + \epsilon \delta_\xi^T \biggl\{ \frac{q_i(z | \cdot)}{m_0^i(z)} - \mathbf{1} \biggr\} \biggr]
	%& = m_0(z) \biggl[ 1 + \epsilon \sum_{j \in B} \Bigl\{ \frac{q(z|j)}{m_0(z)} -1 \Bigr\} \sum_{k=1}^{j_0} \xi_k v_{kj} + \epsilon \biggl( \sum_{j \in B} \sum_{k=1}^{j_0} \xi_k v_{kj} \biggr) \biggl( \frac{1}{j_*} \sum_{j \in B} \frac{q(z|j)}{m_0(z)} -1 \biggr) \biggr] \\
	%& = m_0(z) \biggl[ 1 + \epsilon \sum_{k=1}^{j_0} \xi_k v_k^T \Bigl\{ \frac{q(z | \cdot)}{m_0(z)} -1 \Bigr\} + \frac{\epsilon}{j_*} \biggl( \sum_{k=1}^{j_0} \xi_k v_{k}^T \mathbf{1} \biggr) \mathbf{1}^T \Bigl\{ \frac{q(z| \cdot)}{m_0(z)} -1 \Bigr\} \biggr],
\end{align*}
where we write $\mathbf{1}=(1,\ldots,1) \in \mathbb{R}^{j_*}$ for the constant vector, $q_i(z|\cdot) = (q_i(z|2),\ldots,q_i(z|j_*+1))$ and $\delta_\xi=(\delta_\xi^2,\ldots,\delta_\xi^{j_*}) = \sum_{k=1}^{j_0} \xi_k v_k $. Let $\eta$ be a uniformly random element of $\Xi_\epsilon$, and define
\[
	Y = E_\eta \biggl[ \frac{m_\eta^1(Z_1) \ldots m_\eta^n(Z_n)}{m_0^1(Z_1)\ldots m_0^n(Z_n)} \biggr] -1.
\]
Let $\eta'$ be an independent copy of $\eta$, and let $\xi,\xi'$ be two independent sequences of Rademacher random variables. Then, using the facts that $1+x \leq e^x$ for all $x \in \mathbb{R}$ and $\Xi_\epsilon = - \Xi_\epsilon$, we have
\begin{align*}
	&\mathbb{E}_{p_0}(Y^2) = E_{\eta,\eta'} \bigg[\int \frac{m_\eta^1(z_1) m_{\eta'}^1(z_1) \ldots m_\eta^n(z_n) m_{\eta'}^n(z_n) }{m_0^1(z_1)  \ldots m_0^n(z_n) } \,dz_1 \ldots \, d z_n \biggr] -1 \\
	&= E_{\eta,\eta'} \bigl\{ \bigl( 1 + \epsilon^2 \delta_\eta^T \Omega_1 \delta_{\eta'} \bigr) \ldots \bigl( 1 + \epsilon^2 \delta_\eta^T \Omega_n \delta_{\eta'} \bigr)  \bigr\} -1 \leq  E_{\eta,\eta'} \Bigl\{ \exp \bigl( n \epsilon^2 \delta_\eta^T \bar{\Omega} \delta_{\eta'} \bigr) \Bigr\} -1 \\
	& = E_{\eta,\eta'} \biggl\{ \exp \biggl(n \epsilon^2 \sum_{k=1}^{j_0} \eta_k \eta_{k}' \lambda_k \biggr) - 1 \biggr\} = E_{\eta,\eta'} \biggl\{ \sum_{\ell=1}^\infty \frac{1}{(2\ell)!} \biggl( n \epsilon^2 \sum_{k=1}^{j_0} \eta_k \eta_{k}' \lambda_k \biggr)^{2\ell} \biggr\} \\
	& \leq \frac{1}{P_\xi(\xi \in \Xi_\epsilon)^2} E_{\xi,\xi'} \biggl\{ \sum_{\ell=1}^\infty \frac{1}{(2\ell)!} \biggl( n \epsilon^2 \sum_{k=1}^{j_0} \xi_k \xi_{k}' \lambda_k \biggr)^{2\ell} \biggr\} \\
 & = \frac{1}{P_\xi(\xi \in \Xi_\epsilon)^2} E_{\xi,\xi'} \biggl\{ \exp \biggl( n \epsilon^2 \sum_{k=1}^{j_0} \xi_k \xi_k' \lambda_k \biggr) -1  \biggr\} \\
	& \leq \frac{1}{P_\xi(\xi \in \Xi_\epsilon)^2} \biggl\{ \exp \biggl( \frac{n^2 \epsilon^4}{2} \sum_{k=1}^{j_0} \lambda_k^2 \biggr) -1 \biggr\} \leq \frac{\exp \bigl( 2 n^2 \epsilon^4 (e^\alpha-1)^4 j_0 \bigr) -1 }{P_\xi(\xi \in \Xi_\epsilon)^2}.
	%&\mathbb{E}_0(Y^2) = \int m_0^1(z_1)\ldots m_0^n(z_n) \biggl\{ \mathbb{E}_\eta \biggl[ \frac{m_\eta^1(z_1) \ldots m_\eta^n(z_n)}{m_0^1(z_1)\ldots m_0^n(z_n)} \biggr] -1 \biggr\}^2 \,dz_1 \ldots dz_n \\
	%&= \sum_{r=1}^n \binom{n}{r} \mathbb{E}_{\eta,\eta'} \biggl[ \mathbb{E}_0^r \biggl\{ \Bigl( \frac{m_\eta(Z)}{m_0(Z)} -1 \Bigr) \Bigl( \frac{m_{\eta'}(Z)}{m_0(Z)} -1 \Bigr) \biggr\} \biggr] \\
	%& = \sum_{r=1}^n \binom{n}{r} \epsilon^{2r} \mathbb{E}_{\eta,\eta'} \biggl[ \biggl\{ \biggl( \sum_{k=1}^{j_0} \eta_k v_k \biggr)^T \Omega \biggl( \sum_{k=1}^{j_0} \eta_k' v_k \biggr) \biggr\}^r \biggr] \\
	%& = \sum_{r=1}^{\lfloor n/2 \rfloor} \binom{n}{2r} \epsilon^{4r} \mathbb{E}_{\eta,\eta'} \biggl\{ \biggl( \sum_{k=1}^{j_0} \eta_k \eta_k' \lambda_k \biggr)^{2r} \biggr\} \leq \sum_{r=1}^{\lfloor n/2 \rfloor} \frac{\binom{n}{2r} \epsilon^{4r}}{\mathbb{P}(\xi \in \Xi_\epsilon)^2} \mathbb{E}_{\xi,\xi'} \biggl\{ \biggl( \sum_{k=1}^{j_0} \xi_k \xi_k' \lambda_k \biggr)^{2r} \biggr\} \\
	%& \leq \sum_{r=1}^{\lfloor n/2 \rfloor} \frac{\binom{n}{2r} \epsilon^{4r}}{\mathbb{P}(\xi \in \Xi_\epsilon)^2} r! \biggl( 4 \sum_{k=1}^{j_0} \lambda_k^2 \biggr)^r \leq \frac{1}{\mathbb{P}(\xi \in \Xi_\epsilon)^2} \biggl\{ \exp \biggl( 4 n^2 \epsilon^4 \sum_{k=1}^{j_0} \lambda_k^2 \biggr) -1 \biggr\} \\
	%& \leq \frac{\exp(8 n^2(e^\alpha-1)^4 j_0 \epsilon^4) -1}{\mathbb{P}(\xi \in \Xi_\epsilon)^2}
\end{align*}

We now study $P_\xi(\xi \in \Xi_\epsilon)$. Note that for each $j \in B$ the random variable $\delta_\xi^j$ is subgaussian with variance proxy $\sum_{k=1}^{j_0} v_{kj}^2 \leq 1$. We therefore have \citep[][Theorem~11.8]{BGM2013}
\begin{align*}
	E_\xi \Bigl\{ \max_{j \in B} |\delta_\xi^j| \Bigr\} \leq \{2 \log (2j_*) \}^{1/2} \quad \text{and} \quad \mathrm{Var}_\xi \Bigl( \max_{j \in B} |\delta_\xi^j| \Bigr) \leq 8 \{2 \log (2j_*) \}^{1/2} + 2.
\end{align*}
Hence, $P_\xi( \max_{j \in B} |\delta_\xi^j| \geq 2 \log^{1/2}(2 j_*) ) \rightarrow 0$ as $d \rightarrow \infty$. Now $\delta_\xi^+$ is subgaussian with variance proxy
\[
	\sum_{k=1}^{j_0} \biggl( \sum_{j \in B} v_{kj} \biggr)^2 = \sum_{k=1}^{j_0} (v_k^T \mathbf{1})^2 \leq \| \mathbf{1} \|^2 \leq j_*.
\]
We may therefore take
\[
	\epsilon \asymp \min \biggl\{ \frac{1}{j_*^{1/4} (n \alpha^2)^{1/2}},  \frac{p_0(j_*+1)}{\log^{1/2}(j_*)}, \frac{1}{j_*^{1/2}} \biggr\}.
\]
Now
\begin{align*}
	\|p_\xi - p_0 \|_1 &= \epsilon \sum_{j \in B} | \delta_\xi^j - p_0(j) \delta_\xi^+| + \epsilon \sum_{j \in B^c} p_0(j) |\delta_\xi^+| \geq \epsilon \sum_{j \in B} |\delta_\xi^j| - \epsilon |\delta_\xi^+|.
\end{align*}
By the Khintchine inequality we have that
\begin{align*}
	\sum_{j \in B} E_\xi |\delta_\xi^j| &= \sum_{j \in B} E_\xi \biggl| \sum_{k=1}^{j_0} \xi_k v_{kj} \biggr| \geq \frac{1}{2^{1/2}} \sum_{j \in B} \biggl( \sum_{k=1}^{j_0} v_{kj}^2 \biggr)^{1/2} \geq \frac{1}{2^{3/2}} \sum_{j \in B} \mathbbm{1}_{ \{ \sum_{k=1}^{j_0} v_{kj}^2 \geq 1/4 \}} \\
	& \geq \frac{1}{2^{3/2}} \biggl( \sum_{j \in B} \sum_{k=1}^{j_0} v_{kj}^2 - \frac{j_*}{4} \biggr) = \frac{j_0 - j_*/4}{2^{3/2}} \geq \frac{j_*}{24\sqrt{2}},
\end{align*}
where the final inequality follows from the facts that $j_0 > j_*/2 -1$ and $j_0 \in \mathbb{N}$. Now
%\begin{align*}
%	\mathrm{Var} \biggl( \sum_{j \in B} |\delta_\xi^j| \biggr) = \mathrm{Var} \biggl( \sum_{j \in B} \biggl| \sum_{k=1}^{j_0} \xi_k v_{kj} \biggr| \biggr)\leq \sum_{j, j' \in B} \mathbb{E}^{1/2}\{ (\delta_\xi^j)^2 \} \mathbb{E}^{1/2}\{ (\delta_\xi^{j'})^2 \} \leq j_*^2.
%\end{align*}
%(I think this can probably be improved to $\lesssim j_*$ somehow to get a high probability event rather than constant probability, only changes constants in overall lower bound)
%
%\textcolor{blue}{I suggest:}
\begin{align*}
	\mathrm{Var}_\xi \biggl( \sum_{j \in B} |\delta_\xi^j| \biggr) = \mathrm{Var}_\xi \biggl( \sum_{j \in B} \biggl| \sum_{k=1}^{j_0} \xi_k v_{kj} \biggr| \biggr)\leq E_\xi \left[ \biggl( \sum_{j \in B} \biggl| \sum_{k=1}^{j_0} \xi_k v_{kj} \biggr| \biggr)^2 \right].
\end{align*}
Denote by $V = \sum_{j \in B} \biggl| \sum_{k=1}^{j_0} \xi_k v_{kj} \biggr|  $.
We can prove that, for $t>1$,
\begin{eqnarray*}
P_\xi\left( V \geq t \sqrt{2 \log(j^*)} \right)
&\leq & \sum_{j \in B} P_\xi \left( \biggl| \sum_{k=1}^{j_0} \xi_k v_{kj} \biggr| \geq t \sqrt{2 \log(j^*)} \right)\\
&\leq & j^* \exp\left( - t^2 \log(j^*) \right) \leq \exp(-(t^2-1) \log(j^*)).
\end{eqnarray*}
Now,
$
E_\xi[V^2] = \int_0^\infty 2v P_\xi(V\geq v) dv \leq 2 j^* + 2 \int_{j^*}^\infty v \exp(-v^2/2 + j^*)dv
\lesssim j^*.
$

Moreover, $E_\xi|\delta_\xi^+| \leq j_*^{1/2}$. Writing $Z:= \frac{\sum_{j \in B} | \delta_\xi^j|}{ \sum_{j \in B} \mathbb{E} |\delta_\xi^j|}$ we have $\mathrm{Var}_\xi(Z) \leq 1152$ and hence that
\begin{align*}
	1 &= E_\xi Z  \leq \frac{1}{4} + 4612 P_\xi (1/4 \leq Z < 4612) + \frac{\mathbb{E}(Z^2)}{4612}  \leq \frac{1}{2} + 4612 P_\xi(Z \geq 1/4).
\end{align*}
Thus,
\begin{align*}
	P_\xi \biggl( \|p_\xi - p_0\|_1 \geq \frac{\epsilon j_*}{192 \sqrt{2}} \biggr) &\geq P_\xi \biggl( \sum_{j \in B} | \delta_\xi^j| \geq \frac{j_*}{96 \sqrt{2}} \biggr) - P_\xi \biggl( | \delta_\xi^+| > \frac{j_*}{192 \sqrt{2}} \biggr) \\
	& \geq \frac{1}{9224} - \frac{192 \sqrt{2}}{j_*^{1/2}} \geq \frac{1}{10000}
\end{align*}
for $j_*$ sufficiently large. Thus
\begin{align*}
	\mathcal{E}_{n,\alpha}^\mathrm{NI}(p_0, \mathbb{L}_1) &\gtrsim \epsilon j_* \gtrsim \min \biggl\{ \frac{j_*^{3/4}}{(n \alpha^2)^{1/2}}, \frac{j_*p_0(j_*+1)}{\log^{1/2}(2 j_*)}, j_*^{1/2} \biggr\} \\
& = \min \biggl\{ \frac{j_*^{3/4}}{(n \alpha^2)^{1/2}}, \frac{j_*p_0(j_*+1)}{\log^{1/2}(2 j_*)} \biggr\},
\end{align*}
and the result follows.

\bigskip

The proof for the $\mathbb{L}_2$ test follows the same lines. It is sufficient to bound from below $\|p_\xi-p_0\|_2$ with high probability. We have
\begin{align*}
P_\xi \left( \|p_\xi - p_0\|_2^2\geq \frac 1{144} \varepsilon^2 j_*\right) & \geq P_\xi \left(\varepsilon^2 \left\{ \sum_{j \in B} (\delta_\xi^j)^2 - 2 \delta_\xi^{+} \cdot \sum_{j \in B} \delta_\xi^j p_0(j) \right\} \geq \frac 1{144} \varepsilon^2 j_* \right) \\
& \geq P_\xi\left(\sum_{j \in B} (\delta_\xi^j)^2 \geq \frac 1{16} j_* \right) -P_\xi \left( 2 \delta_\xi^{+} \cdot \sum_{j \in B} \delta_\xi^j p_0(j) \geq \frac 1{18} j_*\right),
\end{align*}
for $j_*$ large enough. Moreover,
$\sum_{j \in B} E_\xi(\delta_\xi^j)^2 = \sum_{j \in B} \sum_k v^2_{kj} = j_0$ by orthonormality of the eigenvectors $v_j$ and
\[
E_\xi \left[\left( \sum_{j \in B} (\delta_\xi^j)^2 \right)^2 \right] = \left(\sum_{j \in B}\sum_{k =1}^{j_0} v^2_{kj} \right)^2 = j_0^2.
\]
Therefore, $P_\xi(\sum_{j \in B} (\delta_\xi^j)^2 \geq 2 j_0 ) \leq 1/4$. Denote by $Z = \sum_{j \in B} (\delta_\xi^j)^2$ We get
\[
1 = E_\xi (Z/\E Z) \leq \frac 14 + 2 P_\xi(Z \geq \E Z/4) + P_\xi(Z \geq 2 \E Z )\leq \frac 12 + 2 \cdot P_\xi(Z \geq j_0/4)
\]
meaning that $P_\xi(Z \geq j_*/16) \geq P_\xi(Z \geq j_0/4) \geq 1/4$ (as $j_0 \geq j_*/2 - 1 \geq j_*/4$ for $j_*$ large enough).
Also
\begin{align*}
P_\xi \left( 2 \delta_\xi^{+} \cdot \sum_{j \in B} \delta_\xi^j p_0(j) \geq \frac 1{18} j_*\right)
& \leq \frac {36}{ j_*} E_\xi \left[|\delta_\xi^+ \cdot \sum_{j \in B} \delta_\xi^j p_0(j)| \right] \\
& \leq \frac {36}{ j_*} \left(  E_\xi (\delta_\xi^+)^2 \cdot E_\xi (\sum_{j \in B} \delta_\xi^j p_0(j))^2 \right)^{1/2}\\
& \leq \frac {36}{ j_*} j_*^{1/2} \left( \sum_k \sum_j v^2_{kj} p_0(j)\right)^{1/2} \leq \frac {36}{j_*^{1/2}},
\end{align*}
which is less or equal to 1/5 for $j_*$ large enough. Thus
\[
	\mathcal{E}_{n,\alpha}^\mathrm{NI}(p_0, \mathbb{L}_2) \gtrsim \epsilon \sqrt{j_*} \gtrsim \min \biggl\{ \frac{j_*^{1/4}}{(n \alpha^2)^{1/2}}, \frac{j_*^{1/2} p_0(j_*+1)}{\log^{1/2}(2 j_*)}, 1 \biggr\} .
\]
\end{proof}

\begin{proof}[Proof of Theorem~\ref{Thm:IntLowerBound}]
Let us first prove the bounds for the $\mathbb{L}_2$ norm. When $\epsilon \in [0,1-1/d]$ we can define the probability vector
\[
	p = (1-\epsilon) p_0 + (0,\ldots,0,\epsilon),
\]
which satisfies $\|p-p_0\|_1 = \epsilon \{1-p_0(d)\} \leq \epsilon$ and
\[
	\|p-p_0\|_2 = \epsilon \biggl[ \{1-p_0(d)\}^2 + \sum_{j=1}^{d-1} p_0(j)^2 \biggr]^{1/2} \geq \epsilon ( 1- 1/d).
\]
Thus, using Theorem~1 of \citet{DJW2018} and taking $\epsilon \leq \frac{1}{\sqrt{8 n \alpha^2}}$, we have that
\[
	\| M_1 - M_0 \|_\mathrm{TV} \leq \frac{1}{\sqrt{2}}
\]
for any sequentially interactive privacy mechanism that takes $p_0$ to $M_0$ and $p$ to $M_1$. We can therefore establish a lower bound of the order of $(n \alpha^2)^{-1/2}$ for the $\mathbb{L}_2$ testing problem.

\bigskip

{\bf Proof of the lower bounds for the $\mathbb{L}_1$-risk, interactive mechanisms}
Fix $j_* \in [d]$ and write $B=\{1,\ldots,j_*\}$. Let $Q$ be a sequentially interactive, $\alpha$-LDP privacy mechanism, and for each $i \in [n], j \in [d]$ and $z_1,\ldots,z_{i-1},z$, write $q(z|j, z_1,\ldots,z_{i-1})$ for the conditional density of $Z_i$ given $X_i=j,Z_1=z_1,\ldots,Z_{i-1}=z_{i-1}$. For each $i \in [n]$ and $z_1,\ldots,z_{i-1}$ define the $j_* \times j_*$ matrix $\Omega_i(z_1,\ldots,z_{i-1})$ by
\begin{align*}
	&\Omega_i(z_1,\ldots,z_{i-1})_{j_1 j_2} \\
	& := \int \{p_0^T q_i(z | \cdot, z_1,\ldots,z_{i-1})\} \biggl( \frac{q_i(z | j_1, z_1,\ldots,z_{i-1} )}{p_0^T q_i(z | \cdot, z_1,\ldots,z_{i-1}) } -1 \biggr)\biggl( \frac{q_i(z | j_2, z_1,\ldots,z_{i-1} )}{p_0^T q_i(z | \cdot, z_1,\ldots,z_{i-1}) } -1 \biggr)^T  \,dz.
\end{align*}
Consider the $j_* \times j_*$ non-negative definite matrix
\[
	\Omega:= \mathbb{E}_{p_0} \biggl[ \sum_{i=1}^n \Omega_i(Z_1,\ldots,Z_{i-1}) \biggr],
\]
and write $\lambda_1 \geq \lambda_2 \geq \ldots \geq \lambda_{j_*} \geq 0$ for its eigenvalues and $v_1,\ldots,v_{j_*}$ for its associated eigenvectors, with $v_d=p_0$ and $\lambda_d=0$ if $j_*=d$. Given a sequence $\xi=(\xi_1,\ldots,\xi_{j_* \wedge(d-1)}) \in \{-1,1\}^{j_* \wedge (d-1)}$ define $\delta_\xi^j := \sum_{k=1}^{j_* \wedge (d-1)} \xi_k v_{kj}$ for $j \in B$ and define $\delta_\xi^+:= \sum_{j \in B} \delta_\xi^j$. Further, given $\epsilon >0$, set
\[
		p_\xi(j) := \left\{ \begin{array}{cc} (1- \epsilon \delta_\xi^+) p_0(j)+ \epsilon \delta_\xi^j & \text{if } j \in B \\ (1- \epsilon \delta_\xi^+) p_0(j) & \text{otherwise} \end{array} \right. .
\]
This sums to zero, and when $\epsilon \lesssim p_0(j_*)/\sqrt{\log (2j_*)}$ and $\xi$ is an i.i.d. Rademacher vector, then $p_\xi$ is also non-negative with high probability. Moreover, for each $i \in [n]$ and $z_1,\ldots,z_i$, we have
\[
	\biggl| \frac{(p_\xi - p_0)^T q_i(z_i | \cdot, z_1,\ldots,z_{i-1})}{p_0^T q_i(z_i | \cdot, z_1,\ldots,z_{i-1}) } \biggr| \leq e^{2\alpha} \|p_\xi - p_0\|_1 \leq 2 e^{2 \alpha} \epsilon \sum_{j \in B} \biggl| \sum_{k=1}^{j_* \wedge(d-1)} \xi_k v_{kj} \biggr|,
\]
and this is $\lesssim \epsilon j_* \rightarrow 0$ with high probability. Given $z_1,\ldots,z_n$ and $\xi$ write
\[
	m_\xi(z_1,\ldots,z_n) = \prod_{i=1}^n p_\xi^T q_i(z_i | \cdot, z_1,\ldots,z_{i-1})
\]
for the marginal density of $Z_1,\ldots,Z_n$ when $X_1,\ldots,X_n$ have distribution $p_\xi$, and similary define $m_0$ for the density of $Z_1,\ldots,Z_n$ when $X_1,\ldots,X_n$ have distribution $p_0$. Writing $M_\xi$ for the distribution associated with $m_\xi$ and $\bar{M}$ for the mixture distribution $E_\xi (M_\xi)$, we have that
\begin{align*}
	&\mathrm{KL}(M_0 \|  \bar{M}) \leq E_\xi [\mathrm{KL}( M_0 \| M_\xi) ] = E_\xi \biggl[ \int m_0(z) \log \frac{m_0(z)}{m_\xi(z)} \,dz \biggr] \\
	& = - \sum_{i=1}^n E_\xi \biggl[ \int \biggl( \prod_{i'=1}^i p_0^T q_{i'}(z_{i'} | \cdot, z_1,\ldots,z_{i'-1}) \biggr) \log \biggl(1 + \frac{(p_\xi - p_0)^T q_i(z_i | \cdot, z_1,\ldots,z_{i-1})}{p_0^T q_i(z_i | \cdot, z_1,\ldots,z_{i-1})} \biggr) \,dz_1 \ldots \, dz_i \biggr] \\
	& \leq  \sum_{i=1}^n E_\xi \biggl[ \int \biggl( \prod_{i'=1}^i p_0^T q_{i'}(z_{i'} | \cdot, z_1,\ldots,z_{i'-1}) \biggr) \biggl( \frac{(p_\xi - p_0)^T q_i(z_i | \cdot, z_1,\ldots,z_{i-1})}{p_0^T q_i(z_i | \cdot, z_1,\ldots,z_{i-1})} \biggr)^2 \,dz_1 \ldots \,dz_i \biggr] \\
	& = \epsilon^2 \sum_{i=1}^n E_\xi \biggl[ \sum_{j_1,j_2 \in B} \delta_\xi^{j_1} \mathbb{E}_{p_0} \Bigl\{ \Omega_i(Z_1,\ldots,Z_{i-1} )_{j_1j_2} \Bigr\} \delta_\xi^{j_2} \biggr] = \epsilon^2 \sum_{k_1,k_2 =1}^{j_* \wedge(d-1)} E_\xi \Bigl[ \xi_{k_1} \xi_{k_2} v_{k_1}^T \Omega v_{k_2} \Bigr] \\
	& = \epsilon^2 \sum_{k=1}^{j_* \wedge (d-1)} \lambda_k = \epsilon^2 \mathrm{tr} (\Omega) \lesssim \epsilon^2 j_* n \alpha^2.
\end{align*}
Now, as in our earlier, non-interactive, lower bound, we have
\[
	\|p_\xi - p_0 \|_1 = \epsilon \sum_{j \in B} \biggl| \sum_{k=1}^{j_* \wedge (d-1)} \xi_k v_{kj} \biggr| \gtrsim_p \epsilon j_*.
\]
We can then choose $\epsilon \asymp \min\{ (j_* n \alpha^2)^{-1/2}, p_0(j_*) / \log^{1/2} (2j_*) \}$ to prove a lower bound of
\[
	\epsilon j_* \asymp \min \Bigl\{ \Bigl( \frac{j_*}{n \alpha^2} \Bigr)^{1/2}, \frac{p_0(j_*)}{\log^{1/2} (2 j_*)} \Bigr\}.
\]
\end{proof}

\section{Examples}\label{Subsec:Examples}

\noindent {\bf Polynomially decreasing distributions. }
\label{Eg:RegVar}
Suppose that $p_0(j) \propto j^{-1-\beta}$ for some $\beta >0$. Writing $C=2 (1- 2^{-\beta})^{-1/(\beta+3/4)}$, when $n\alpha^2 \leq (d/C)^{2\beta+3/2}$, consider $j=\lceil C (n \alpha^2)^{1/(2\beta+3/2)} \rceil$. Then, when also $n \alpha^2 \geq 1$, we have that
\begin{align*}
	\sum_{\ell=j+1}^d p_0(\ell) &= \frac{\sum_{\ell=j+1}^d \ell^{-1-\beta}}{\sum_{\ell=1}^d \ell^{-1-\beta}} \leq \frac{\int_j^\infty x^{-1-\beta} \,dx}{\int_1^{d+1} x^{-1-\beta} \,dx} \leq \frac{j^{-\beta}}{1-2^{-\beta}} = \frac{j^{3/4}}{(n\alpha^2)^{1/2}} \frac{j^{-\beta-3/4}(n\alpha^2)^{1/2}}{1-2^{-\beta}} \\
	& \leq \frac{j^{3/4}}{(n \alpha^2)^{1/2}} \frac{2^{\beta+3/4}}{C^{\beta+3/4} (1-2^{-\beta})} = \frac{j^{3/4}}{(n\alpha^2)^{1/2}}.
\end{align*}
Thus, when $1 \leq n \alpha^2 \leq (d/C)^{2\beta+3/2}$ we have that $j_* \leq
\lceil C (n \alpha^2)^{1/(2\beta+3/2)} \rceil $. On the other hand, if $n\alpha^2 > (d/C)^{2\beta+3/2}$ then we will just say that $j_* \leq d$. It follows that
\[
	\mathcal{E}^\mathrm{NI}_{n,\alpha}(p_0, \mathbb{L}_1) \lesssim \frac{j_*^{3/4}}{(n\alpha^2)^{1/2}} \lesssim \min \Bigl\{ (n \alpha^2)^{-\frac{2 \beta}{4 \beta+3}}, \frac{d^{3/4}}{(n \alpha^2)^{1/2}} \Bigr\}.
\]

More generally, suppose that $p_0(j) \propto j^{-1-\beta} L(j) $ for some slowly-varying function $L:[1,\infty) \rightarrow (0,\infty)$. We recall that $L$ is said to be slowly-varying if and only if $\lim_{x \rightarrow \infty} L(tx)/L(x) =1$ for all $t>0$, and that Karamata's theorem says that
\[
	\lim_{x \rightarrow \infty} \frac{(\gamma-1) \int_x^\infty t^{-\gamma} L(t) \,dt }{ x^{-\gamma+1} L(x)} =1
\]
for any $\gamma > 1$. Writing $c_d:= \sum_{\ell=1}^d \ell^{-1-\beta} L(\ell)$, whenever $j \rightarrow \infty$ with $j \ll d$ we have that
\begin{align*}
	\sum_{\ell=j+1}^d p_0(\ell) &= c_d^{-1} \sum_{\ell=j+1}^\infty \ell^{-1-\beta}L(\ell) - c_d^{-1} \sum_{\ell=d+1}^\infty \ell^{-1-\beta} L(\ell) \sim c_d^{-1} \sum_{\ell=j+1}^\infty \ell^{-1-\beta}L(\ell) \\
	& \sim \frac{j^{-\beta} L(j)}{c_d \beta} = \frac{jp_0(j)}{\beta}.
\end{align*}
Letting $x_{n \alpha^2}:= \inf \{x \geq  1 : L(x) < \frac{x^{3/4 + \beta}}{(n \alpha^2)^{1/2}} \}$, we can see that
\[
	\mathcal{E}^\mathrm{NI}_{n,\alpha}(p_0, \mathbb{L}_1) \lesssim \frac{ \min(x_{n\alpha^2},d)^{3/4}}{ (n \alpha^2)^{1/2}}.
\]

\bigskip

Let us discuss the lower bounds. Writing $c=\frac{\beta^2(2 \beta + 3/2)}{2(1-2^{-\beta})^2}$ and $j=\lfloor \{c n \alpha^2/\log(n \alpha^2)\}^{1/(2\beta+3/2)} \rfloor$, when $\log(n \alpha^2) \geq \log c + (2 \beta+3/2) \log 2$ and $\frac{c n \alpha^2}{\log(n \alpha^2)} \leq d^{2 \beta + 3/2}$, we have that
\begin{align*}
	\frac{j p_0(j)}{ \log^{1/2}(2j)} &= \frac{j^{-\beta}}{\log^{1/2}(2j) \sum_{\ell=1}^d \ell^{-1-\beta}} \geq \frac{\beta j^{-\beta}}{\log^{1/2}(2j) (1-2^{-\beta})} = \frac{j^{3/4}}{(n \alpha^2)^{1/2}} \frac{\beta}{1-2^{-\beta}} \frac{(n \alpha^2)^{1/2}}{ \log^{1/2}(2j) j^{\beta+3/4} } \\
	&\geq \frac{j^{3/4}}{(n \alpha^2)^{1/2}} \frac{\beta (2 \beta+3/2)^{1/2}}{c^{1/2} (1-2^{-\beta})} \biggl\{ 1 + \frac{\log c + (2 \beta+3/2) \log 2}{\log ( n \alpha^2)} \biggr\}^{-1/2} \geq \frac{j^{3/4}}{(n \alpha^2)^{1/2}}.
\end{align*}
Hence, we have $\ell_* \geq j$. On the other hand, when $ \frac{cn \alpha^2}{\log (n \alpha^2)} > d^{2 \beta + 3/2}$ and $\log(n \alpha^2) > c 2^{2\beta+3/2}$, we have
\begin{align*}
	\frac{dp_0(d)}{\log^{1/2}(2d)} \geq \frac{d^{3/4}}{(n \alpha^2)^{1/2}} \frac{\beta}{1-2^{-\beta}} \frac{(n \alpha^2)^{1/2}}{d^{3/4+\beta} \log^{1/2}(2d)} \geq \frac{d^{3/4}}{(n \alpha^2)^{1/2}},
\end{align*}
and so $\ell_* =d$. In either case, then,
\begin{eqnarray*}
	\mathcal{E}_{n,\alpha}^\mathrm{NI}(p_0, \mathbb{L}_1)& \gtrsim &\frac{\{(n\alpha^2) / \log(n \alpha^2) \}^{(3/4)/(2\beta+3/2)} \wedge d^{3/4}}{(n \alpha^2)^{1/2}} \\
&= &\bigl\{ n \alpha^2 \log^{3/(4 \beta)}(n \alpha^2) \bigr\}^{-2\beta/(4 \beta+3)} \wedge \frac{d^{3/4}}{(n \alpha^2)^{1/2}}.
\end{eqnarray*}

More generally, suppose that $p_0(j) \propto j^{-1-\beta} L(j)$ and recall the definition of $x_{n \alpha^2}$ from Example~\ref{Eg:RegVar}. Taking $j=\min(\lfloor x_{n\alpha^2 / \log(n \alpha^2)} \rfloor , d)$ in Theorem~\ref{Thm:NonIntLowerBound}, we have that
\[
	\mathcal{E}_{n,\alpha}^\mathrm{NI}(p_0, \mathbb{L}_1) \gtrsim \frac{\min( x_{n\alpha^2 / \log(n \alpha^2)}, d)^{3/4}}{(n \alpha^2)^{1/2}},
\]
which matches our upper bound up to a log factor.

\bigskip

\noindent {\bf Exponentially decreasing distributions. } Suppose that $p_0(j) \propto \exp(-j^\beta)$ for some $\beta >0$. Writing $C$ for a large constant, if $(\frac{1}{4 \beta} + \frac{1}{2}) \log(C n \alpha^2) \leq d^\beta$ then consider $j = \lceil \{ \log( C n \alpha^2)/2 - (1-1/(4\beta)) \log \log(C n\alpha^2) \}^{1/\beta} \rceil$. Then
\begin{align*}
	\sum_{\ell=j}^d p_0(\ell) \leq \frac{\int_j^\infty \exp(- x^\beta) \,dx}{\int_1^{d+1} \exp( - x^\beta) \,dx}  \lesssim j^{1-\beta} e^{-j^\beta} \lesssim \frac{ \log^{3/(4\beta) } (C n \alpha^2) }{ \sqrt{Cn \alpha^2} } \lesssim \frac{j^{3/4}}{ \sqrt{C n \alpha^2} },
\end{align*}
and we can therefore see that $j_* \lesssim \log^{1/\beta} (n \alpha^2)$. As a result,
\[
	\mathcal{E}^\mathrm{NI}_{n,\alpha}(p_0, \mathbb{L}_1) \lesssim \min \biggl\{ \frac{\log^{3/(4 \beta)}( n \alpha^2)}{(n \alpha^2)^{1/2}}, \frac{d^{3/4}}{(n \alpha^2)^{1/2}} \biggr\}.
\]

Concerning the lower bounds, write $c$ for a small constant and consider $j = \lfloor \{ \log( c n \alpha^2)/2 + \log \log(c n\alpha^2)/(4\beta) - \log \log \log(c n \alpha^2) /2 \}^{1/\beta} \rfloor$. If $j \leq d$ then we have
\begin{align*}
	\frac{jp_0(j)}{\log^{1/2}(2j)} \gtrsim \frac{\log^{1/\beta}(c n \alpha^2) e^{-j^\beta}}{\log^{1/2}( \log(c n \alpha^2) ) } \gtrsim \frac{\log^{3/(4 \beta)}(c n \alpha^2 )}{ \sqrt{c n \alpha^2}} \gtrsim \frac{j^{3/4}}{ \sqrt{c n \alpha^2}}.
\end{align*}
If, on the other hand, $j > d$, then
\begin{align*}
	\frac{dp_0(d)}{ \log^{1/2}(2d)} \gtrsim \frac{d \exp(-d^\beta)}{ \log^{1/2}(2d)} \gtrsim \frac{\log^{3/(4 \beta)}( cn \alpha^2)}{ \sqrt{c n \alpha^2}} \gtrsim \frac{d^{3/4}}{ \sqrt{c n \alpha^2}}.
\end{align*}
In either case, then, we have
\[
	\mathcal{E}_{n,\alpha}^\mathrm{NI}(p_0 , \mathbb{L}_1) \gtrsim \min \biggl\{ \frac{ \log^{3/(4 \beta)}(n \alpha^2)}{\sqrt{n \alpha^2}}, \frac{d^{3/4}}{\sqrt{n \alpha^2}} \biggr\},
\]
and this matches our previous upper bound.

\bibliographystyle{plainnat}

\begin{thebibliography}{99}

\bibitem[{Acharya et al.(2018)}]{ASZ2018} Acharya, J., Sun, Z. and Zhang, H. (2018) Differentially private testing of identity and closeness of discrete distributions.
\newblock \emph{NeurIPS}, 6878--6891.


\bibitem[{Acharya et al.(2018)}]{ACT2018}Acharya, J., Cannone, C. L. and Tyagi, H. (2018) Inference under information constraints I: Lower bounds from chi-square contraction.
\newblock \texttt{arXiv:1812.11476}.

\bibitem[{Acharya et al.(2019)}]{ACFT2019}Acharya, J., Canonne, C. L., Freitag, C. and Tyagi, H. (2019) Test without trust: Optimal locally private distribution testing.
\newblock \emph{Proc. Mach. Learn. Res.}, \textbf{89}, 2067--2076.

\bibitem[{Aliakbarpour et al.(2018)}]{ADR2018}Aliakbarpour, M., Diakonikolas, I. and Rubinfeld, R. (2018) Differentially private identity and equivalence testing of discrete distributions.
\newblock \emph{ICML}, 169--178.

\bibitem[{Balakrishnan and Wasserman(2018)}]{BW2017} Balakrishnan, S. and Wasserman, L. (2018) Hypothesis testing for high-dimensional multinomials: A selective review. {\it Ann. Appl. Stat.}, {\bf 12}, 2, 727--749.

\bibitem[{Bassily and Smith(2015)}]{BS2015} Bassily, R. and Smith, A. (2015) Local, private, efficient protocols for succinct histograms.
\newblock \emph{Proc. 47th ACM Symp. Theory Comp.}, 127--135.

\bibitem[{Boucheron, Lugosi and Massart(2013)}]{BGM2013} Boucheron, S., Lugosi, G. and Massart, P. (2013) \textit{Concentration Inequalities}. \newblock Oxford University Press, Oxford.

\bibitem[{Butucea et al.(2020)}]{BRS2020} Butucea, C., Rohde, A. and Steinberger, L. (2020) Interactive versus non-interactive locally differentially private estimation: Two elbows for the quadratic functional. {\it arxiv:2003.04773}

\bibitem[{Cai et al.(2017)}]{CDK2017}Cai, B., Daskalakis, C. and Kamath, G. (2017)  Priv'IT: private and sample efficient identity testing.
\newblock \emph{ICML}, 635--644.

\bibitem[{Couch et al.(2019)}]{CKSBG2019} Couch, S., Kazan, Z., Shi, K., Bray, A. and Groce, A. (2019) Differentially private nonparametric hypothesis testing.
\emph{Proc. 2019 ACM SIGSAC Conf. Comp. Comm. Sec.}, 737--751.

\bibitem[{Diakonikolas and Kane(2016)}]{DK2016} Diakonikolas, I. and Kane, D. M. (2016) A new approach for testing properties of discrete distributions.
\newblock \emph{IEEE 57th Annual Symposium on Foundations of Computer Science}, 685--694.

\bibitem[{Duchi et al.(2013)}]{DJW2013} Duchi, J. C., Jordan, M. I. and Wainwright, M. J. (2013) Local privacy and minimax bounds: Sharp rates for probability estimation.
\emph{NeurIPS}, 1529--1537.

\bibitem[{Duchi et al.(2018)}]{DJW2018} Duchi, J. C., Jordan, M. I. and Wainwright, M. J. (2018) Minimax optimal procedures for locally private estimation.
\emph{J. Amer. Stat. Soc.}, \textbf{113}(521), 182--201.

\bibitem[Dwork et al.(2006)]{DMNS2006} Dwork, C., McSherry, F., Nissim, K. and Smith, A. (2006). Calibrating noise to sensitivity in private data analysis.
\newblock {\em Theory of Cryptography}, 265--284.

\bibitem[{Gaboardi et al.(2016)}]{GLRV2016} Gaboardi, M., Lim, H. W., Rogers, R. and Vadhan, S. P. (2016) Differentially private chi-squared hypothesis testing: goodness of fit and independence testing.
\newblock \emph{ICML}, 2111--2120.

\bibitem[{Gaboardi and Rogers(2017)}]{GR2017} Gaboardi, M. and Rogers, R. (2017) Local private hypothesis testing: Chi-square tests.
\newblock \texttt{arXiv:1709.07155}.

\bibitem[{Joseph et al.(2019)}]{JMNR2019} Joseph, M., Mao, J., Neel, S. and Roth A. (2019) The role of interactivity in local differential privacy.
\newblock \emph{IEEE 60th FOCS}, 94--105.

\bibitem[{Kairouz et al.(2014)}]{KOV2014}Kairouz, P., Oh, S. and Viswanath, P. (2016) Extremal mechanisms for local differential privacy.
\newblock \emph{NeurIPS}, 2879--2887.

\bibitem[{Kairouz et al.(2016)}]{KOV2016}Kairouz, P., Oh, S. and Viswanath, P. (2016) Extremal mechanisms for local differential privacy.
\newblock \emph{J. Mach. Learn. Res.}, \textbf{17}, 1--51.

\bibitem[{Lam-Weil et al.(2020)}]{LLL2020} Lam-Weil, J., Laurent, B. and Loub\`es, J.-M. (2020) Minimax optimal goodness-of-fit testing for densities under a local differential privacy constraint. {\it arxiv:2002.04254}

\bibitem[{Liao et al.(2017)}]{LSTd2017} Liao, J., Sankar, L., Tan, V. Y. F. and du Pin Calmon, F. (2017) Hypothesis testing under mutual information privacy constraints in the high privacy regime.
\newblock \emph{IEEE T. Inf. Foren. Sec.}, \textbf{13}(4), 1058--1071.

%\bibitem[{Rohde and Steinberger(2020)}]{RS2020} Rohde, A. and Steinberger, L. (2020) Geometrizing rates of convergence under local differential privacy constraints.
%\newblock \emph{Ann. Statist., to appear.} \texttt{arXiv:1805.01422}.

\bibitem[{Serfling(1980)}]{Serfling1980} Serfling, R. J. (1980) \emph{Approximation Theorems of Mathematical Statistics}. John Wiley \& Sons.

\bibitem[{Sheffet(2018)}]{Sheffet2018} Sheffet, O. (2018) Locally private hypothesis testing.
\newblock \texttt{arXiv:1802.03441}.

\bibitem[{Valiant and Valiant(2017)}]{VV2017} Valiant, G. and Valiant, P. (2017) An automatic inequality prover and instance optimal identity testing.
\newblock \emph{SIAM Journal on Computing}, \textbf{46}, 429--455.

\bibitem[{Wang et al.(2015)}]{WLK2015}Wang, Y., Lee, J. and Kifer, D. (2015) Revisiting differentially private hypothesis tests for categorical data.
\newblock \texttt{arXiv:1511.03376}.

\bibitem[{Warner(1965)}]{Warner1965}Warner, S. L. (1965) Randomized response: A survey technique for eliminating evasive answer bias.
\newblock \emph{J. Amer. Stat. Soc.}, \textbf{60}, 63--69.


\end{thebibliography}

\end{document}